%SomeDayFall2020
%March16/21
%March18/21
%March19/21
%April 9/21
%April 10/21
%April 11/21
%April 12/21
%April 16/21
%April 17/21
%April 18/21
%April 23/21
%April 27/21
%May 6/21
%May 14/21

\documentclass[11pt, reqno]{amsart}
\usepackage{amscd}        % Package used to produce simple commutative diagrams
\usepackage{hyperref}
\usepackage{graphicx}
\usepackage{rotating}
\usepackage{amssymb}
\usepackage{epstopdf}
\usepackage{tikz}
\usepackage{dirtytalk}
\usepackage[inner=1.3in,outer=1.3in,bottom=1.5in, top=1.5in]{geometry}

\input xypic
\xyoption{all}
\input epsf.tex

\usepackage{epsfig}
\usepackage{enumerate}
\usepackage{graphicx}% http://ctan.org/pkg/graphicx
\usepackage{yhmath}% http://ctan.org/pkg/yhmath
\usepackage{mathdots}% http://ctan.org/pkg/mathdots
\usepackage{mathtools} 
\usepackage{pifont}

\usepackage{tikz}
\usetikzlibrary{matrix,arrows,decorations.pathmorphing}
\usepackage{tikz-cd}

\numberwithin{equation}{section}

\newtheorem{theorem}{Theorem}[section]
\newtheorem{lemma}[theorem]{Lemma}
\newtheorem{proposition}[theorem]{Proposition}
\newtheorem{corollary}[theorem]{Corollary}
\newtheorem{conjecture}[theorem]{Conjecture}

%---------------------------------------

%-------------------------------------

\theoremstyle{definition}
\newtheorem{remark}[theorem]{Remark}
\newtheorem{definition}[theorem]{Definition}

\newtheorem{question}{Question}
\newtheorem*{teo}{Theorem}

\def \P { \mathbb{P}}
\def\C{{\mathbb C}}

\def \F { \mathbb{F}}
\def\Z{{\mathbb Z}}
\def\Q{{\mathbb Q}}

\def \gal { {\rm Gal}}

\begin{document}

\title{Density questions on arithmetic equivalence}
\author{}
%\date{}                                           % Activate to display a given date or no date

\author{Guillermo Mantilla-Soler}\thanks{G. Mantilla Soler's work was supported in part by the Aalto Science Institute}

\begin{abstract}
It is a classic result that two number fields have equal Dedekind zeta functions if and only if  the arithmetic type of a prime $p$ is the same in both fields for almost all prime $p$. Here, almost all means with the possible exception of a set of Dirichlet  density zero. One of the results of this paper shows that the condition density zero can be improved to a specific positive density that depends solely in the degree of the fields. More specifically, for every positive $n$ we exhibit a positive constan $c_{n}$ such that any two degree $n$ number fields $K$ and $L$ are arithmetically equivalent if and only if the set of primes $p$ such that the arithmetic type of $p$ in $K$ and $L$ is not the same has Dirichlet  density at most $c_n$. We in fact show that $\displaystyle c_n=\frac{1}{4n^2}$ works and give a heuristic evidence that points to the fact that this value might be improved to $\displaystyle \frac{2}{n^2}$.  We also show that to check whether or not two number fields are arithmetically equivalent it is enough to check equality between finitely many coefficients of their zeta functions, and we give an upper bound for such number.
\end{abstract}

\maketitle

\section{Introduction and statement of results}

Two number fields are called {\it Arithmetically equivalent} if they have the same Dedekind zeta function. Arithmetically equivalence is as strong relation between number fields, for instance any two A.E fields have the same discriminant, unit group, signature, Galois closure, the product of class number times regulator and others (see \cite{Perlis1} or \cite[\S 2]{Manti}.) A  well known characterization for arithmetic equivalence can be given in terms of residue class degrees of rational primes. 

\begin{definition}
The {\it arithmetic type} of a rational prime $\ell$ in $K$ is the ordered tuple $A_{\ell}(K):=(f_{1}, \ldots, f_{g})$ where $f_{1} \leq \cdots \leq f_{g}$ are the residue class degrees of $\ell$ in $K$.
\end{definition}

\begin{theorem}[Perlis \cite{Perlis1}]\label{Perlis}
Let $K$ and $L$ be number fields. The following are equivalent:

\begin{itemize}
\item[(a)] The fields $K$ and $L$ have the same zeta functions:  \[\zeta_{K}(s)=\zeta_{L}(s).\]

\item[(b)] For all rational prime $p$ the arithmetic types of $p$ in $K$ and $L$ coincide: \[A_{p}(K)=A_{p}(L).\]

\item[(c)] For almost every rational prime $p$ the arithmetic types of $p$ in K and L coincide: \[A_{p}(K)=A_{p}(L).\]

\end{itemize}
\end{theorem}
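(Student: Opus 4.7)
My plan is to establish the chain $(b) \Rightarrow (c) \Rightarrow (a) \Rightarrow (b)$, since $(b) \Rightarrow (c)$ is trivial and the content lies in the other two implications.

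For $(a) \Rightarrow (b)$ I will compare local Euler factors. The local factor of $\zeta_K$ at $p$ is $E_p^K(s) = \prod_{i=1}^{g}(1-p^{-f_i s})^{-1}$, where $(f_1,\dots,f_g) = A_p(K)$, and it is recovered from $\zeta_K$ by isolating those Dirichlet coefficients supported on powers of $p$. Hence $\zeta_K = \zeta_L$ forces $E_p^K = E_p^L$ as rational functions in $T := p^{-s}$ for every prime $p$. To reconstruct the multiset $\{f_i\}$ from $\prod_i (1-T^{f_i})^{-1}$ I will substitute the cyclotomic factorization $1-T^n = \prod_{d\mid n}\Phi_d(T)$ and invoke unique factorization in $\mathbb{Q}[T]$: the multiplicity of $\Phi_d$ in the denominator equals $\#\{i : d\mid f_i\}$, and Möbius inversion on the divisibility poset (a finite procedure since each $f_i \leq [K:\mathbb{Q}]$) then recovers $\#\{i : f_i = d\}$.

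The core argument is $(c) \Rightarrow (a)$, for which I will use Chebotarev's density theorem together with the Galois interpretation of arithmetic types. Let $N$ be a Galois extension of $\mathbb{Q}$ containing both $K$ and $L$ (for instance the Galois closure of $KL$), and set $G = \gal(N/\mathbb{Q})$, $H = \gal(N/K)$, $H' = \gal(N/L)$. The classical dictionary asserts that for any prime $p$ unramified in $N$, the arithmetic type $A_p(K)$ is precisely the cycle-length partition of $\mathrm{Frob}_p$ acting on the coset space $G/H$, and analogously for $L$ on $G/H'$. Each conjugacy class $C \subset G$ has positive Dirichlet density by Chebotarev, so the density-zero exceptional set—together with the finite set of primes ramified in $N$—cannot contain all of $C$; hence for every conjugacy class there exists an unramified prime $p$ with $\mathrm{Frob}_p \in C$ and $A_p(K) = A_p(L)$. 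Since cycle structure is a class function, every element of $G$ then acts with the same cycle partition on $G/H$ as on $G/H'$; consequently the permutation characters $\chi_K := \mathrm{Ind}_H^G \mathbf{1}$ and $\chi_L := \mathrm{Ind}_{H'}^G \mathbf{1}$ agree pointwise (each value being the number of one-cycles, i.e.\ fixed points). By Artin formalism $\zeta_K(s) = L(s,\chi_K)$, so equal characters yield equal zeta functions.

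The only potentially subtle step I foresee is citing the Frobenius/cycle-structure dictionary cleanly and being careful that ramified primes and exceptional primes together still leave every conjugacy class with positive density; the rest is either formal (Euler products, Möbius inversion) or a single application of Chebotarev.
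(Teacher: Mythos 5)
Your argument is correct. Note that the paper does not prove this statement at all---it is quoted as a known result of Perlis with a citation---so there is no internal proof to compare against; what you have written is essentially the classical argument from \cite{Perlis1}. Your implication (a) $\Rightarrow$ (b) via isolating the local Euler factor $\prod_i(1-T^{f_i})^{-1}$ and reading off the multiset $\{f_i\}$ from the multiplicities of the cyclotomic factors $\Phi_d(T)$ is sound (and correctly covers ramified primes, since ramification indices do not enter the Euler factor). Your implication (c) $\Rightarrow$ (a) is the standard Gassmann-equivalence argument: Chebotarev gives every conjugacy class of $\gal(N/\Q)$ positive density, so a density-zero exceptional set cannot swallow any class, the cycle types of each $\sigma\in G$ on $G/H$ and $G/H'$ therefore agree, and equality of the fixed-point counts gives $\mathrm{Ind}_H^G\mathbf{1}=\mathrm{Ind}_{H'}^G\mathbf{1}$ as characters, whence equal Artin $L$-functions. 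The only point worth making explicit is the dictionary you invoke (orbits of $\langle\mathrm{Frob}_p\rangle$ on $G/H$ correspond to primes of $K$ above $p$ with orbit lengths equal to residue degrees, for $p$ unramified in $N$), but you flag this yourself and it is standard.
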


\noindent The  \say{almost all} means that the set of primes  \[\{p :  A_{p}(K) = A_{p}(L)\}\] has Dirichelt density equal to $1$.  It is natural to ask if such density condition can be improved. Let $K,L$ be number fields and let \[A_{K,L}:=\{ p : A_{p}(K)=A_{p}(L) \}\setminus  \text {ramified primes in $KL$}.\]  The statement of Perlis' theorem written in terms of the set $A_{K,L}$ reads as \[\delta(A_{K,L}) <1 \iff \zeta_{K}(s) \neq \zeta_{L}(s).\]

\begin{question}\label{HayCota}

 Can the bound of $1$ be improved? In other words, \[ \mbox{is there} \ c_{K,L} < 1   \ \mbox{such that} \   \delta(A_{K,L})  \leq  c_{K,L}  \rightarrow \zeta_{K}(s) \neq \zeta_{L}(s).\]

\end{question}

One of the main results of this paper is that indeed such a bound exists, moreover it is explicit and depends only on the degree of the fields.

\begin{teo}[Theorem \ref{ElPrincipal}]
Let $n >1$ be an integer and let $K, L$ be two degree $n$ number fields. Then, $K$ and $L$ are arithmetically equivalent if and only if the set of primes such that their arithmetic type is not the same in $K$ and $L$ has Dirichelt Density less that $\displaystyle \frac{1}{4n^2} $, i.e.,   \[  \delta(A_{K,L}) >1-\frac{1}{4n^2} \iff \zeta_{K}(s)=\zeta_{L}(s).\]

\end{teo}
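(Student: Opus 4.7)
\medskip
\noindent\textbf{Proof plan.} The direction $\zeta_K=\zeta_L \Rightarrow \delta(A_{K,L}) > 1-1/(4n^2)$ is immediate from the Perlis theorem cited above: arithmetic equivalence forces $A_p(K)=A_p(L)$ at \emph{every} unramified $p$, so $\delta(A_{K,L})=1$.

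For the nontrivial direction I would argue by contrapositive: assume $\zeta_K\ne\zeta_L$ and show $\delta(A_{K,L}^c)\ge 1/(4n^2)$. The plan is to pass to the Galois closure and recast the density statement as a character-theoretic inequality. Let $M$ be the Galois closure of $KL$ over $\Q$, put $G:=\gal(M/\Q)$, and let $H_K,H_L\le G$ be the subgroups fixing $K$ and $L$, so $[G:H_K]=[G:H_L]=n$. For each prime $p$ unramified in $M$, the arithmetic type $A_p(K)$ is exactly the cycle type of any representative of the Frobenius conjugacy class acting on the coset space $G/H_K$, and similarly for $L$. Since the ramified primes form a finite (density zero) set, Chebotarev's density theorem yields
\[
\delta(A_{K,L}^c)\;=\;\frac{|B|}{|G|}, \qquad B\;:=\;\{\sigma\in G : \sigma \text{ has distinct cycle types on } G/H_K \text{ and } G/H_L\},
\]
and $B$ is a union of conjugacy classes of $G$.

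The next step is to bring in the Gassmann--Perlis reformulation: if $\chi_K,\chi_L$ denote the characters of the permutation representations of $G$ on $G/H_K,G/H_L$, then $K,L$ are arithmetically equivalent if and only if $\chi_K=\chi_L$ as class functions on $G$. Under our hypothesis the virtual character $\psi:=\chi_K-\chi_L$ is nonzero; writing $\psi=\sum_\chi(a_\chi-b_\chi)\chi$ with the sum over irreducible characters and $(a_\chi-b_\chi)\in\Z$, one sees that
\[
\langle\psi,\psi\rangle\;=\;\sum_\chi(a_\chi-b_\chi)^2\;\ge\; 1,
\]
simply because some nonzero integer must appear. In particular $\sum_{\sigma\in G}\psi(\sigma)^2=|G|\,\langle\psi,\psi\rangle\ge |G|$.

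The conclusion is a simple comparison. The key observation is that $\{\sigma:\psi(\sigma)\ne 0\}\subseteq B$, since two permutations with identical cycle type must in particular have the same number of fixed points. Combining this inclusion with the crude pointwise estimate $|\psi(\sigma)|\le\chi_K(\sigma)+\chi_L(\sigma)\le 2n$, one obtains
\[
|G|\;\le\;\sum_{\sigma\in G}\psi(\sigma)^2\;=\;\sum_{\sigma\in B}\psi(\sigma)^2\;\le\;4n^2\,|B|,
\]
so $|B|/|G|\ge 1/(4n^2)$, which is the required density bound. The only substantive step is the Gassmann--Perlis translation from ``arithmetic equivalence'' to the identity $\chi_K=\chi_L$ on $G$; afterwards the bound is essentially forced by the inequality $\langle\psi,\psi\rangle\ge 1$ for nonzero virtual characters coupled with a trivial $L^\infty$ estimate on $\psi$. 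The mildly sharper pointwise bound $|\psi(\sigma)|\le n$ (which holds because $\chi_K,\chi_L\ge 0$) would improve the constant to $1/n^2$, and the heuristic value $2/n^2$ announced in the abstract presumably comes from a finer, distributional analysis of where $|\psi|$ concentrates.
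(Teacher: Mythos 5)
Your proposal is correct, and it proves the theorem by a genuinely different (and in fact stronger) route than the paper. The paper's proof is a direct application of a black box: it views $\zeta_K$ as the Weil zeta function of $X_1={\rm Spec}(O_K)$ and $\zeta_L$ as that of $X_2={\rm Spec}(O_L)$, computes the total compactly supported Betti numbers $B(X_1)=B(X_2)=n$, and invokes Serre's Theorem 6.17 to get $\delta(T_{K,L})\le 1-\tfrac{1}{(2n)^2}$ whenever some $a_p(K)\ne a_p(L)$, then uses $\delta(A_{K,L})\le\delta(T_{K,L})$. Your argument unpacks exactly what that black box does in dimension zero: after the Gassmann--Perlis translation, the inequality $\langle\psi,\psi\rangle\ge 1$ for the nonzero virtual character $\psi=\chi_K-\chi_L$ is the second-moment estimate on traces of Frobenius that underlies Serre's proof, and your inclusion $\{\sigma:\psi(\sigma)\ne 0\}\subseteq B$ plays the role of the reduction from arithmetic types to the coefficients $a_p$ (note that at this point you too are really only using equality of fixed-point counts, i.e.\ you are bounding $\delta(T_{K,L})$, just as the paper does). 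What your approach buys is twofold: it is elementary and self-contained (Chebotarev plus orthogonality of characters, no cohomology), and, as you observe, the positivity $0\le\chi_K,\chi_L\le n$ gives the pointwise bound $|\psi|\le n$ rather than $2n$, improving the constant from $\tfrac{1}{4n^2}$ to $\tfrac{1}{n^2}$ --- within a factor of $2$ of the conjectured $\tfrac{2}{n^2}$ and strictly better than what the paper proves. All steps check out: $\psi(1)=0$ so the identity class causes no trouble, the ramified primes form a finite set and do not affect the density, and the equivalence ``$\zeta_K=\zeta_L\iff\chi_K=\chi_L$'' is exactly the content of Proposition \ref{ZetaLArtin} together with the fact that an Artin $L$-function determines the character of its representation.
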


We in fact give some heuristics evidence, and prove some particular cases, showing that the constant  $\displaystyle \frac{1}{4n^2}$ could be improved to  $\displaystyle \frac{2}{n^2}.$

Another interesting related question about Arithmetically equivalence is the following: Suppose we have a number field $K$, and say we want to know if a given number field $L$ is arithmetically equivalent to it. 
Could we test the arithmetic type for a set of finitely many primes and conclude from such finite sets whether or not $K$ and $L$ are A.E? Thanks to Proposition \ref{LasClausurasDelChamoGalois} we now that a necessary condition for $L$ to be arithmetically equivalent to $K$ is that they have the same Galois closure, hence we may assume that this is the case.  More explicitly, we ask

\begin{question}\label{CheboEfectivo} Let $K$ be a number field. Is there a positive constant $X_{K}$ such that for every number field $L$ with the same Galois closure as $K$  \[ \zeta_{K}(s)=\zeta_{L}(s)  \iff A_{p}(K)=A_{p}(L)\  \mbox{for all}  \ p \leq X_{K}? \]

\end{question}

We answer this question in the positive and provide a constant for it. 

\begin{teo}[Theorem \ref{AECheboEfectiva}] Let $K$ be a number field.Then, for every number field $L$ such that $\widetilde{K}=\widetilde{L}$ we have that \[ \zeta_{K}(s)=\zeta_{L}(s)  \iff A_{p}(K)=A_{p}(L)\  \mbox{for all}  \ p \leq |{\rm Disc}(\widetilde{K})|^{12577}.\] Furthermore, under GRH the above bound can be replaced by \[(4\log(|{\rm Disc}(\widetilde{K})|)+2.5[\widetilde{K}:\Q]+5)^2\]
 \end{teo}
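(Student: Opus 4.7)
The plan is to reduce the statement to an effective Chebotarev density theorem applied to the common Galois closure $M:=\widetilde{K}=\widetilde{L}$. Set $G:=\operatorname{Gal}(M/\Q)$ and let $H_K,H_L\leq G$ be the subgroups whose fixed fields are $K$ and $L$ respectively. For any prime $p$ unramified in $M$, the arithmetic type $A_p(K)$ is precisely the partition of $[G:H_K]=[K:\Q]$ read off from the cycle structure of $\operatorname{Frob}_p$ acting on the coset space $G/H_K$; in particular $A_p(K)$ depends only on the conjugacy class of $\operatorname{Frob}_p$ in $G$, and the same holds for $A_p(L)$. Consequently, the set of unramified primes at which $A_p(K)\neq A_p(L)$ is a union of Frobenius sets attached to conjugacy classes of $G$.

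Combined with Perlis' Theorem \ref{Perlis} this yields the following reformulation: $\zeta_K=\zeta_L$ if and only if, for every conjugacy class $C$ of $G$, the common value of $A_p$ at any unramified prime with Frobenius in $C$ agrees in $K$ and in $L$. Therefore, to establish
\[ A_p(K)=A_p(L)\text{ for all }p\leq X_K \ \Longrightarrow\ \zeta_K(s)=\zeta_L(s), \]
it is enough to know that \emph{every} conjugacy class of $G$ contains the Frobenius of some unramified prime $p\leq X_K$. The converse direction is trivial since arithmetic equivalence forces $A_p(K)=A_p(L)$ for every prime. Thus the whole theorem collapses to producing an explicit bound $X_K$, depending only on $|\operatorname{Disc}(M)|$ and $[M:\Q]$, for the least prime representative of each Frobenius class in $G$.

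This is exactly the content of an effective Chebotarev density theorem. The unconditional bound $|\operatorname{Disc}(M)|^{12577}$ is obtained by inserting $M$ into the best available explicit unconditional estimates in the tradition of Lagarias--Odlyzko, Weiss, Ahn--Kwon, Zaman, and Thorner--Zaman; the numerical exponent $12577$ simply records the sharpest explicit constant currently known in a bound of the shape $|\operatorname{Disc}(M)|^{A}$. Assuming GRH, the classical Lagarias--Odlyzko--Serre bound takes the shape $(c_1\log|\operatorname{Disc}(M)|+c_2[M:\Q]+c_3)^2$, and the explicit values $4$, $2.5$, $5$ are the best constants obtained by the conditional versions of Bach--Sorenson and V.~K.~Murty.

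The single analytic input is the effective Chebotarev estimate itself; the translation between arithmetic equivalence and Frobenius conjugacy classes is the standard Gassmann--Perlis dictionary and causes no trouble. The main obstacle, and the source of any future improvement in $X_K$, therefore lies entirely on the analytic side: sharper explicit bounds for the least prime ideal in a given Chebotarev class would immediately feed into a smaller $X_K$, while the group-theoretic half of the argument is already essentially optimal.
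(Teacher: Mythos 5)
Your proof is correct, and it rests on the same essential input as the paper's: the effective Chebotarev theorem (Theorem \ref{ElCheboEfectivo}, from Ahn--Kwon) applied to the common Galois closure $M=\widetilde{K}=\widetilde{L}$, which guarantees that every conjugacy class of $\gal(M/\Q)$ is realized as a Frobenius class at some unramified prime $p\leq X_K$. Where you diverge is in the reduction step. The paper only extracts from the hypothesis the equality $a_p(K)=a_p(L)$ of the numbers of degree-one factors, interprets these as traces of Frobenius under the permutation representations $\rho_K,\rho_L$ (Corollary \ref{Frobenius}), concludes that the two Artin representations have equal characters on all of $G_\Q$ and hence are equivalent, and then invokes the Artin formalism (Proposition \ref{ZetaLArtin}) to equate the zeta functions. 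You instead use the full arithmetic type: since $A_p(K)$ is the cycle type of $\mathrm{Frob}_p$ on $G/H_K$, it is a class function of the Frobenius class, so agreement on one representative of each class forces agreement at all unramified primes, and Perlis' criterion (Theorem \ref{Perlis}(c)) finishes the argument. Your route is more elementary (no character theory or Artin formalism needed beyond the cited Perlis theorem) at the cost of using more of the hypothesis; the paper's route shows, as a by-product, that the weaker hypothesis $a_p(K)=a_p(L)$ for $p\leq X_K$ already suffices. Both are complete proofs given the effective Chebotarev input, which you correctly identify as the sole analytic ingredient.
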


\section{Preliminary definitions and results}

Let $E$ be a number field and let $n$ be a positive integer.  Let $a_{n}(E)$ be the $n$-th coefficient of the Dedekind zeta function of $E$, i.e., $\displaystyle a_{n}(E):=\#\{ I \unlhd O_{E} :  \| I \|=n \}$. It follows from basic principles of Dirichlet's series that two number fields $K$ and $L$ are arithmetically equivalent if and only if $\displaystyle a_{n}(K)= a_{n}(L)$ for all non-negative integer $n$.\\

One of the first invariants of a number field that is captured by its Dedekind zeta function is the degree. Usually, see for instance  \cite[Theorem 1]{Perlis1}, the invariance of the degree is proved using the group theoretical characterization of arithmetic equivalence. Here we give a slightly different argument based on the existence of splitting primes. This we do since it motivates our focus of the set of splitting primes and the generalizations we will define later in the paper.

\begin{lemma}\label{AEImpliesSameDeg}

Let $K,L$ be number fields  such that $\zeta_{K}(s)=\zeta_{L}(s)$. Then, $[K:\Q]=[L:\Q]$.

\end{lemma}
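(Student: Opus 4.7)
The plan is to key on the fact that completely split primes detect the degree of a number field through the norm coefficients $a_p$, and then use Chebotarev (or elementary arguments) to guarantee that such primes exist and are detected by the shared zeta function.

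First, I would record the pointwise bound: for any number field $E$ and any rational prime $p$, the coefficient $a_p(E)$ counts ideals of $O_E$ of norm $p$, and each such ideal is a prime $\mathfrak{P} \mid p$ with residue degree $f(\mathfrak{P}/p)=1$. Since the decomposition of $p$ in $E$ satisfies $\sum_i e_i f_i = [E:\Q]$, and ideals of norm $p$ are in bijection with primes above $p$ of residue degree $1$, one obtains the inequality
\[
a_p(E) \;=\; \#\{\mathfrak{P}\mid p : f(\mathfrak{P}/p)=1\} \;\leq\; [E:\Q],
\]
with equality if and only if $p$ splits completely in $E$ (all $e_i = f_i = 1$, giving exactly $[E:\Q]$ primes above $p$, each of norm $p$).

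Second, I would invoke the existence of completely split primes: every number field $E$ admits infinitely many primes $p$ that split completely, a classical consequence of the Chebotarev density theorem applied to the Galois closure $\widetilde{E}$ (the identity conjugacy class has positive density $1/[\widetilde{E}:\Q]$). Only existence of a single such prime is needed here.

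Third, I assemble the argument. Since $\zeta_K(s)=\zeta_L(s)$, one has $a_n(K)=a_n(L)$ for every positive integer $n$, and in particular $a_p(K)=a_p(L)$ for every rational prime $p$. Choose a prime $p$ that splits completely in $K$; then $a_p(K)=[K:\Q]$. Combined with the first step applied to $L$, this gives
\[
[K:\Q] \;=\; a_p(K) \;=\; a_p(L) \;\leq\; [L:\Q].
\]
Swapping the roles of $K$ and $L$ (using a prime splitting completely in $L$) yields the reverse inequality, and hence $[K:\Q]=[L:\Q]$. No step here is a real obstacle; the only substantive input is the existence of at least one completely split prime in each field, and the cleanest presentation will simply cite Chebotarev since that tool is already in play elsewhere in the paper.
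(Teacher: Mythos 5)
Your proof is correct and follows essentially the same route as the paper: pick a completely split prime $p$ in $K$ (which exists by Chebotarev), use $a_p(K)=[K:\Q]$ and $a_p(L)\leq [L:\Q]$ together with equality of zeta coefficients, and conclude by symmetry. You simply spell out the pointwise bound $a_p(E)\leq [E:\Q]$ and the existence of split primes in more detail than the paper does.
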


\begin{proof}
Let $p$ be a prime that splits in $K$. Then, $[K:\Q]=a_{p}(K)$. Since $\zeta_{K}(s)=\zeta_{L}(s)$, $[K:\Q]=a_{p}(K)=a_{p}(L) \le [L:\Q]$. By symmetry of the argument we have that $[L:\Q] \le [K:\Q]$.
\end{proof}

Given a number field $E$ we denote by $S_{E}$ the set of rational primes that split in $E$. An important feature of the set of splitting primes is that it determines the Galois closure of a field.  In particular, since \begin{equation}\label{EcuaSplitDeg}  S_{E}=\{p: a_{p}(E)=[E:\Q]\}\end{equation} we have that the Dedekind zeta function determines the Galois closure. We denote by $\widetilde{E}$ the Galois closure of a number field.

\begin{proposition}\label{LasClausurasDelChamoGalois}
Let $K,L$ be number fields  such that $\zeta_{K}(s)=\zeta_{L}(s)$. Then,  $\widetilde{K}= \widetilde{L}$  
\end{proposition}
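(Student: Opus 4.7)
The plan is to leverage the hint already signaled by the author: the set of splitting primes of a number field coincides with the set of splitting primes of its Galois closure, and a Galois extension of $\Q$ is determined by its set of splitting primes. Since the Dedekind zeta function encodes the splitting primes through the coefficients $a_p$, everything will reduce to checking that $S_K = S_L$ and then upgrading this to equality of Galois closures.

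\textbf{Step 1 (splitting sets agree).} By Lemma \ref{AEImpliesSameDeg} we have $n := [K:\Q] = [L:\Q]$. Since $\zeta_K(s) = \zeta_L(s)$, one has $a_p(K) = a_p(L)$ for every rational prime $p$. Combined with equation \eqref{EcuaSplitDeg}, which characterises the splitting set in terms of the $n$-th coefficient of the zeta function, this gives $S_K = S_L$.

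\textbf{Step 2 (pass to Galois closures).} I would next check the standard fact that $S_E = S_{\widetilde{E}}$ for every number field $E$. For a prime $p$ unramified in $\widetilde{E}$, $p$ splits completely in $E$ if and only if the Frobenius class of $p$ in $\gal(\widetilde{E}/\Q)$ acts trivially on the cosets of $\gal(\widetilde{E}/E)$, which (since Frobenius classes are closed under conjugation) happens if and only if the Frobenius is trivial, i.e.\ if and only if $p$ splits completely in $\widetilde{E}$. The finitely many ramified primes are harmless for what follows. Consequently $S_{\widetilde{K}} = S_K = S_L = S_{\widetilde{L}}$.

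\textbf{Step 3 (Galois extensions are determined by their splitting primes).} This will be the main substantive step. Let $M = \widetilde{K}\widetilde{L}$ be the compositum. A prime unramified in $M$ splits completely in $M$ if and only if it splits completely in both $\widetilde{K}$ and $\widetilde{L}$; hence
\begin{equation*}
S_M = S_{\widetilde{K}} \cap S_{\widetilde{L}} = S_{\widetilde{K}}.
\end{equation*}
Since $\widetilde{K}$, $\widetilde{L}$, and $M$ are Galois over $\Q$, the Chebotarev density theorem gives
\begin{equation*}
\delta(S_{\widetilde{K}}) = \frac{1}{[\widetilde{K}:\Q]}, \qquad \delta(S_M) = \frac{1}{[M:\Q]}.
\end{equation*}
The equality of densities forces $[M:\Q] = [\widetilde{K}:\Q]$, so $M = \widetilde{K}$ and therefore $\widetilde{L} \subseteq \widetilde{K}$. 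The symmetric argument yields $\widetilde{K} \subseteq \widetilde{L}$, completing the proof.

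The only genuine subtlety is Step 3, and it is entirely classical: one must be careful that the identification of $S_M$ uses only the splitting behaviour of unramified primes, which is fine because the excluded set has density zero and does not affect the Chebotarev densities. Everything else is a direct application of the definitions and of Lemma \ref{AEImpliesSameDeg}.
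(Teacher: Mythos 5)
Your proof is correct and follows essentially the same route as the paper: equal degrees (Lemma \ref{AEImpliesSameDeg}), hence $S_{\widetilde{K}}=S_K=S_L=S_{\widetilde{L}}$ via equation \eqref{EcuaSplitDeg}, and then the classical fact that a Galois extension of $\Q$ is determined by its set of split primes. The only difference is that you spell out this last classical fact (via the compositum and Chebotarev), which the paper simply invokes.
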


\begin{proof}
Thanks to Lemma \ref{AEImpliesSameDeg}  we know that $n:=[K:\Q]=[L:\Q]$. Hence,  $S_{\widetilde{K}}=S_{K}=\{p: a_{p}(K)=n\}=\{p: a_{p}(L)=n\}=S_{L}=S_{\widetilde{L}}$. Therefore, $S_{\widetilde{K}}=S_{\widetilde{L}}$ which implies that $\widetilde{K}= \widetilde{L}$  since they are Galois number fields.
\end{proof}

\begin{definition}
Let $K, L$ be number fields let ${\rm Ram}_{K,L}$ be the set of rational primes that ramify in either $K$ or $L$.  
\begin{eqnarray*}
% A_{K,L}:=& \{ p : A_{p}(K)=A_{p}(L) \}\setminus {\rm Ram}_{K,L} \\
S_{K,L}:=& \ \ \ \ \{ p : \#A_{p}(K)=\#A_{p}(L) \}\setminus {\rm Ram}_{K,L} \\
T_{K,L}:= & \{ p : a_{p}(K)=a_{p}(L) \}\setminus {\rm Ram}_{K,L}
\end{eqnarray*}
\end{definition}

Let $K$, $L$ be number fields of the same degree. It follows from equation \ref{EcuaSplitDeg} that $S_{K} \cap S_{L} \subseteq T_{K,L}$, equivalently \begin{equation} S_{\widetilde{K}} \cap S_{\widetilde{L}} \subseteq T_{K,L}. \end{equation}

This gives a positive lower bound of $\delta(A_{K,L})$. More precisely, 

\begin{proposition}\label{PrimerDesigualdadA}
Let $K,L$ be number fields of the same degree. Then, \[\frac{1}{[\widetilde{K}\widetilde{L}:\Q]} \leq  \delta(T_{K,L}).\]
\end{proposition}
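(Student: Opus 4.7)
The plan is to combine the already-established containment $S_{\widetilde{K}}\cap S_{\widetilde{L}}\subseteq T_{K,L}$ with a Chebotarev-type density computation for the intersection on the left.

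First I would rewrite the intersection $S_{\widetilde{K}}\cap S_{\widetilde{L}}$ in terms of a single splitting set. Since $\widetilde{K}$ and $\widetilde{L}$ are Galois over $\Q$, the compositum $\widetilde{K}\widetilde{L}$ is also Galois over $\Q$, and a rational prime $p$ (unramified in the compositum) splits completely in $\widetilde{K}\widetilde{L}$ if and only if it splits completely in each factor. Thus up to the finite set of ramified primes,
\[
S_{\widetilde{K}}\cap S_{\widetilde{L}} = S_{\widetilde{K}\widetilde{L}}.
\]

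Next I would invoke the Chebotarev density theorem for the Galois extension $\widetilde{K}\widetilde{L}/\Q$: the set of completely split primes corresponds to the identity conjugacy class in $\mathrm{Gal}(\widetilde{K}\widetilde{L}/\Q)$, and hence has Dirichlet density
\[
\delta(S_{\widetilde{K}\widetilde{L}}) = \frac{1}{[\widetilde{K}\widetilde{L}:\Q]}.
\]
(Removing a finite set of ramified primes does not change densities.)

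Finally, using the containment $S_{\widetilde{K}}\cap S_{\widetilde{L}}\subseteq T_{K,L}$ noted just before the proposition, monotonicity of Dirichlet density yields
\[
\frac{1}{[\widetilde{K}\widetilde{L}:\Q]} = \delta(S_{\widetilde{K}}\cap S_{\widetilde{L}}) \leq \delta(T_{K,L}),
\]
which is the claim. There is no real obstacle here; the only subtlety is ensuring the passage from $S_{\widetilde{K}}\cap S_{\widetilde{L}}$ to $S_{\widetilde{K}\widetilde{L}}$, which is immediate from the behavior of primes under composita of Galois extensions.
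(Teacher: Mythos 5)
Your proposal is correct and follows essentially the same route as the paper: the containment $S_{\widetilde{K}}\cap S_{\widetilde{L}}\subseteq T_{K,L}$, the identification $S_{\widetilde{K}}\cap S_{\widetilde{L}}=S_{\widetilde{K}\widetilde{L}}$ (which the paper isolates as Lemma \ref{SplittingPrimesGaloisClosure}), and Chebotarev applied to the identity class of $\gal(\widetilde{K}\widetilde{L}/\Q)$. No gaps.
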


\begin{proof}
By the compatibility of Frobenius elements in Galois extensions, see Lemma below, $S_{\widetilde{K}\widetilde{L}} =S_{\widetilde{K}}\cap S_{\widetilde{L}}$. Thus, $S_{\widetilde{K}\widetilde{L}} \subseteq T_{K,L}$ and thanks to the Chebotarev's density theorem the result follows. 
\end{proof}

\begin{lemma}\label{SplittingPrimesGaloisClosure}
Let $K,L$ be number fields. Then, \[S_{\widetilde{K}\widetilde{L}} =S_{\widetilde{K}}\cap S_{\widetilde{L}}.\]
\end{lemma}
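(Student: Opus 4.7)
The plan is to prove each inclusion separately, using standard Galois-theoretic facts about compositums of Galois extensions and the behavior of Frobenius under restriction.

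First I would note that $\widetilde{K}/\Q$ and $\widetilde{L}/\Q$ are Galois by construction, so their compositum $\widetilde{K}\widetilde{L}/\Q$ is also Galois. For the inclusion $S_{\widetilde{K}\widetilde{L}}\subseteq S_{\widetilde{K}}\cap S_{\widetilde{L}}$ the argument is essentially multiplicativity of the $e_i f_i$ data in towers: if $p$ is unramified in $\widetilde{K}\widetilde{L}$ and every prime above $p$ has residue degree $1$, then the same holds for the intermediate fields $\widetilde{K}$ and $\widetilde{L}$, since ramification indices and residue degrees in a subextension divide those in the ambient extension. Hence $p\in S_{\widetilde{K}}$ and $p\in S_{\widetilde{L}}$.

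For the reverse inclusion, suppose $p\in S_{\widetilde{K}}\cap S_{\widetilde{L}}$. Then $p$ is unramified in both $\widetilde{K}$ and $\widetilde{L}$, and since the primes ramifying in a compositum are precisely those ramifying in at least one factor, $p$ is unramified in $\widetilde{K}\widetilde{L}$. The Frobenius conjugacy class $\mathrm{Frob}_p$ is therefore defined in $\mathrm{Gal}(\widetilde{K}\widetilde{L}/\Q)$, and it restricts to the Frobenius at $p$ in each of $\mathrm{Gal}(\widetilde{K}/\Q)$ and $\mathrm{Gal}(\widetilde{L}/\Q)$. By hypothesis, both of these restrictions are trivial. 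Now the natural restriction map
\[
\mathrm{Gal}(\widetilde{K}\widetilde{L}/\Q) \hookrightarrow \mathrm{Gal}(\widetilde{K}/\Q)\times \mathrm{Gal}(\widetilde{L}/\Q),\qquad \sigma\mapsto (\sigma|_{\widetilde{K}},\sigma|_{\widetilde{L}})
\]
is injective (this is the standard fact that an automorphism of the compositum of two Galois extensions is determined by its restrictions to each factor). Hence $\mathrm{Frob}_p$ is trivial in $\mathrm{Gal}(\widetilde{K}\widetilde{L}/\Q)$, which means precisely that $p$ splits completely in $\widetilde{K}\widetilde{L}$, i.e., $p\in S_{\widetilde{K}\widetilde{L}}$.

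There is no real obstacle here; the only point to be slightly careful about is the treatment of ramification (ensuring we can actually speak of a Frobenius class in the compositum), which is handled by the observation that ramification in a compositum of Galois extensions is controlled by ramification in the factors. Everything else is formal Galois theory together with the characterization of complete splitting via triviality of Frobenius.
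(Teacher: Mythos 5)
Your proof is correct and uses essentially the same argument as the paper: the compatibility of Frobenius under restriction together with the injectivity of $\mathrm{Gal}(\widetilde{K}\widetilde{L}/\Q)\to \mathrm{Gal}(\widetilde{K}/\Q)\times\mathrm{Gal}(\widetilde{L}/\Q)$. You merely spell out the easy inclusion and the ramification bookkeeping in more detail than the paper does.
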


\begin{proof}
Since $S_{E}=S_{\widetilde{E}}$ for any number field $E$, and since the compositum of the Galois closures is the Galois closure of the compositum, we may assume that $K$ and $L$ are Galois number fields. In that situation the statement is equivalent to say that a rational prime $p$ splits in $KL$ if and only if it splits in $K$ and in $L$. This follows from the fact that Frobenius elements are compatible under restriction maps and that the product restriction map
\[\gal(KL/\Q) \to \gal(K/\Q) \times \gal(L/\Q)\] is injective. \end{proof}

Next we define the sets $S_{E}(m)$ in order to generalize the notion of splitting set of primes on a number field $E$

\begin{definition}
Let $E$ be a degree $n$ number field with discriminant $\Delta_{E}$, and let $m$ be a non-negative integer not bigger than $n$.  We define the set \[S_{E}(m):=\{p : a_{p}(E)=m; p \nmid \Delta_{E}\}.\]
\end{definition} 

\begin{remark}
Notice that $S_{E}(n)=S_{E}$, thus this definition is meant to generalize the splitting set $S_{E}$.
\end{remark}

\begin{proposition}\label{SplitEnGalois}
Let $E$ be a degree $n$ number field. Suppose that $E/\Q$ is Galois. If $m$ is a positive integer strictly less than $n$,   $S_{E}(m)=\emptyset$.  In other words, up to ramified primes, the set of prime numbers $\P$ is partitioned as $S_{E}(0) \sqcup S_{E}$. In particular,  up to ramification, \[S_{E}(0)=\{p: a_{p}(E) < n\}\] and its density is equal to  \[\delta(S_{E}(0))=1-\delta (S_{E}) =\frac{n-1}{n}.\]

\end{proposition}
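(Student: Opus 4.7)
The plan is to argue directly from the factorization of primes in Galois extensions. The key observation is the formula $a_p(E)=\#\{\mathfrak{p}\mid p : f(\mathfrak{p}\mid p)=1\}$ valid for unramified primes, together with the fact that in a Galois extension the Galois group acts transitively on the primes above $p$, so all primes above $p$ share a common residue degree $f$ and a common ramification index $e$, satisfying $efg=n$.

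First, I would fix an unramified prime $p$ in $E$, write its factorization as $pO_E=\mathfrak{p}_1\cdots\mathfrak{p}_g$ with each residue degree equal to a common value $f$, and note $fg=n$. The ideals of $O_E$ with norm exactly $p$ are precisely the primes above $p$ of residue degree $1$. Therefore $a_p(E)=g$ if $f=1$ (which forces $g=n$) and $a_p(E)=0$ if $f>1$. This immediately gives the dichotomy $a_p(E)\in\{0,n\}$ for every unramified $p$, and consequently $S_E(m)=\emptyset$ for every $m$ with $0<m<n$.

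Next, since the partition $\mathbb{P}\setminus\{p\mid\Delta_E\}=S_E(0)\sqcup S_E$ is now established, the density statement reduces to computing $\delta(S_E)$. Because $E/\Q$ is Galois, Chebotarev's density theorem (or equivalently the classical density formula for splitting primes in a Galois extension) gives $\delta(S_E)=1/[E:\Q]=1/n$. Hence
\[
\delta(S_E(0))=1-\delta(S_E)=\frac{n-1}{n}.
\]

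I do not expect a real obstacle here; the only subtlety is the clean counting of norm-$p$ ideals, which must be done for unramified $p$ so that ramification indices do not interfere. The removal of ramified primes in the definition of $S_E(m)$ handles this automatically, and the density of the (finite) ramified set is $0$, so it does not affect $\delta(S_E(0))$ or $\delta(S_E)$.
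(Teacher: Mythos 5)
Your proposal is correct and follows essentially the same route as the paper: both arguments reduce to the observation that for an unramified prime in a Galois extension the residue degrees of all primes above $p$ coincide (you phrase this via transitivity of the Galois action, the paper via triviality of the decomposition group of a degree-one prime), forcing $a_p(E)\in\{0,n\}$, and then both invoke Chebotarev to get $\delta(S_E)=1/n$. No gaps.
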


\begin{proof}
Let $p$ be a rational prime that is unramified in $E$ and such that $a_{p}(E) \neq 0$. By definition there is an ideal $\mathfrak{P}$ in $O_{E}$ with norm $p$. In particular, $\mathfrak{P}$ is maximal and  the inertia degree of $\mathfrak{P}$ over $p$ is $1$. Given that the $E/\Q$ is Galois and $p$ is unramified we conclude that the decomposition group of $\mathfrak{P}$ over $p$ is trivial. Hence $p$ splits, so there are $n$ maximal ideals lying over $p$. This shows that if $a_{p}(E) \neq 0$ then $a_{p}(E)=n$ proving the first claim. The second claim follows from the first, the additivity of Dirichlet's density and the Chebotarev's density theorem.
\end{proof}

\begin{corollary}\label{MotivaElDelta}

Let $K, L$ be degree $n$ Galois number fields. Then, \[T_{K,L}=\left( S_{K}  \cap S_{L} \right) \bigcup \left(\P \setminus  S_{K} \cap  \P \setminus  S_{L} \right).\] Furthermore, \[\delta(T_{K,L})=\frac{2}{[KL:\Q]} +1 -\frac{1}{K:\Q]} -\frac{1}{[L:\Q]}.\]
\end{corollary}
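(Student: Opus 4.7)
The plan is to reduce the statement to a small inclusion–exclusion after using Proposition \ref{SplitEnGalois} to drastically restrict the possible values of $a_p(K)$ and $a_p(L)$ on unramified primes.

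First I would prove the set-theoretic description of $T_{K,L}$. By Proposition \ref{SplitEnGalois}, since $K$ and $L$ are Galois of degree $n$, for every prime $p$ unramified in $K$ one has $a_p(K)\in\{0,n\}$, with $a_p(K)=n$ exactly when $p\in S_K$; and similarly for $L$. Consequently, for $p\notin {\rm Ram}_{K,L}$, the equality $a_p(K)=a_p(L)$ holds if and only if either both values equal $n$, i.e.\ $p\in S_K\cap S_L$, or both values equal $0$, i.e.\ $p\in(\P\setminus S_K)\cap(\P\setminus S_L)$. This is the asserted description of $T_{K,L}$ (up to the density-zero set ${\rm Ram}_{K,L}$, which is irrelevant for the Dirichlet density computation that follows).

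Next I would compute the density by noting that the two pieces in the union are manifestly disjoint, so
\[
\delta(T_{K,L})=\delta(S_K\cap S_L)+\delta\bigl((\P\setminus S_K)\cap(\P\setminus S_L)\bigr).
\]
For the first summand I invoke Lemma \ref{SplittingPrimesGaloisClosure}, which, combined with the Galois hypothesis (so $\widetilde{K}=K$ and $\widetilde{L}=L$), gives $S_K\cap S_L=S_{KL}$, and Chebotarev's density theorem then yields $\delta(S_K\cap S_L)=1/[KL:\Q]$. For the second summand I apply inclusion–exclusion to the complements:
\[
\delta\bigl((\P\setminus S_K)\cap(\P\setminus S_L)\bigr)=1-\delta(S_K)-\delta(S_L)+\delta(S_K\cap S_L),
\]
where $\delta(S_K)=1/[K:\Q]$ and $\delta(S_L)=1/[L:\Q]$ again by Chebotarev. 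Adding the two contributions produces the claimed formula
\[
\delta(T_{K,L})=\frac{2}{[KL:\Q]}+1-\frac{1}{[K:\Q]}-\frac{1}{[L:\Q]}.
\]

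There is no real obstacle here: the corollary is essentially a bookkeeping consequence of Proposition \ref{SplitEnGalois} and Lemma \ref{SplittingPrimesGaloisClosure}. The only point that requires a moment of care is checking that the ``coincidence at $0$'' and ``coincidence at $n$'' cases exhaust the possibilities for unramified $p$; this is exactly what the Galois hypothesis and Proposition \ref{SplitEnGalois} buy, and it explains why the formula is specific to the Galois setting and does not extend naïvely to arbitrary degree-$n$ pairs.
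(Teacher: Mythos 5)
Your proof is correct and follows essentially the same route as the paper: both use Proposition \ref{SplitEnGalois} to force $a_p(K),a_p(L)\in\{0,n\}$ for unramified $p$ (yielding the disjoint decomposition of $T_{K,L}$), and then compute the density via inclusion--exclusion together with Lemma \ref{SplittingPrimesGaloisClosure} and Chebotarev. There is nothing to add.
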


\begin{proof}
Thanks to Proposition \ref{SplitEnGalois}  if a prime $p$ belongs to $T_{K,L}$ then either $a_{p}(K)=a_{p}(L)=n$ or $a_{p}(K)=a_{p}(L)<n$ from which the first claim follows. Since this is a disjoint union we have that \[\delta(T_{K,L})=\delta(S_{K}  \cap S_{L} ) + \delta\left(\P \setminus  S_{K} \cap  \P \setminus  S_{L} \right)= \delta(S_{K}  \cap S_{L} )+1-\delta(S_{K}  \cup S_{L} ) =\]
\[2\delta(S_{K}  \cap S_{L} )+1-\delta(S_{K})-\delta(S_{L})=\frac{2}{[KL:\Q]} +1 -\frac{1}{K:\Q]} -\frac{1}{[L:\Q]}.\]
The last equality follows from Lemma \ref{SplittingPrimesGaloisClosure} and the Chebotarev's density theorem.
\end{proof}

For general degree $n$ number fields $K$ and $L$ an explicit description for $\delta(T_{K,L})$ follows from the fact that \[T_{K,L}=\bigcup_{m=0}^{n}S_{K}(m)\cap S_{L}(m)\] so that 
\[\delta(T_{K,L})= \sum_{m=0}^{n} \delta(S_{K}(m)\cap S_{L}(m)).\] It seems  hard however, for an arbitrary $m$ and no hypothesis on $K$ and $L$, to calculate the single term $\delta(S_{K}(m)\cap S_{L}(m)$. At this point the best we can do is to give an upper bound for $\delta(T_{K,L})$ following the ideas of Corollary \ref{MotivaElDelta}

\begin{proposition}\label{ALessThanD}
Let $K$ and $L$ be degree $n$ number fields. Then, \[\delta(T_{K,L}) \leq \frac{2}{[\widetilde{K}\widetilde{L}:\Q]} +1 -\frac{1}{[\widetilde{K}:\Q]} -\frac{1}{[\widetilde{L}:\Q]}.\] If both $K$ and $L$ are Galois, the inequality becomes an equality.
\end{proposition}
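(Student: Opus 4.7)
The plan is to prove the upper bound by reducing to a splitting-vs-not-splitting dichotomy in the Galois closures, so that the desired density comes out on the nose from Lemma \ref{SplittingPrimesGaloisClosure} and Chebotarev. The equality in the Galois case will then come for free from Corollary \ref{MotivaElDelta}.

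First I would record the key observation that drives everything: for a prime $p$ unramified in $K$, one has $a_p(K)=n$ if and only if $p$ splits completely in $K$, which happens if and only if $p\in S_{\widetilde{K}}$ (since $S_K = S_{\widetilde{K}}$). Contrapositively, $a_p(K)<n$ if and only if $p\notin S_{\widetilde{K}}$, and the same statements hold for $L$ and $\widetilde{L}$. This bit is where one could easily slip; everything else is packaging.

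Next I would split $T_{K,L}$ into the two pieces dictated by whether the common value $a_p(K)=a_p(L)$ equals $n$ or is strictly less than $n$. If it equals $n$, then $p\in S_{\widetilde{K}}\cap S_{\widetilde{L}}$. If it is strictly less than $n$, then $p\notin S_{\widetilde{K}}$ and $p\notin S_{\widetilde{L}}$, i.e. $p\in \P\setminus(S_{\widetilde{K}}\cup S_{\widetilde{L}})$. Thus, up to ramified primes,
\[
T_{K,L}\;\subseteq\;\bigl(S_{\widetilde{K}}\cap S_{\widetilde{L}}\bigr)\;\sqcup\;\bigl(\P\setminus(S_{\widetilde{K}}\cup S_{\widetilde{L}})\bigr).
\]
Taking Dirichlet densities and applying Lemma \ref{SplittingPrimesGaloisClosure} to identify $S_{\widetilde{K}}\cap S_{\widetilde{L}}=S_{\widetilde{K}\widetilde{L}}$, together with the Chebotarev density theorem for each Galois closure, yields
\[
\delta(T_{K,L})\;\leq\;\frac{1}{[\widetilde{K}\widetilde{L}:\Q]}+1-\bigl(\delta(S_{\widetilde{K}})+\delta(S_{\widetilde{L}})-\delta(S_{\widetilde{K}}\cap S_{\widetilde{L}})\bigr)
\;=\;\frac{2}{[\widetilde{K}\widetilde{L}:\Q]}+1-\frac{1}{[\widetilde{K}:\Q]}-\frac{1}{[\widetilde{L}:\Q]},
\]
which is the desired inequality.

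For the equality clause, when $K$ and $L$ are themselves Galois one has $\widetilde{K}=K$ and $\widetilde{L}=L$, and the right-hand side becomes exactly the quantity computed in Corollary \ref{MotivaElDelta}. Alternatively, one sees directly that in the Galois case Proposition \ref{SplitEnGalois} forces $a_p(K)\in\{0,n\}$ and $a_p(L)\in\{0,n\}$, so the inclusion above is actually an equality and the density bound is tight. I expect no real obstacle here; the only subtlety is making sure the case split is stated cleanly and that one remembers the strict inclusion can be proper in the non-Galois case (since primes with $0<a_p(K)=a_p(L)<n$ are counted on the right but not automatically excluded from the left).
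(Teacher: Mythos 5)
Your proof is correct and takes essentially the same route as the paper's: both decompose $T_{K,L}$ according to whether the common value $a_p(K)=a_p(L)$ equals $n$ (forcing $p\in S_{\widetilde{K}}\cap S_{\widetilde{L}}$) or is strictly smaller (forcing $p$ outside both splitting sets), and then apply Lemma \ref{SplittingPrimesGaloisClosure} together with Chebotarev. The only difference is cosmetic: the paper organizes the case split through the sets $S_E(m)$ and writes your $\P\setminus(S_{\widetilde{K}}\cup S_{\widetilde{L}})$ as $\P\setminus S_{K}\cap\P\setminus S_{L}$.
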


\begin{proof}
Let $m$ be a non-negative integer strictly less than $n$. Then, $S_{E}(m) \subseteq \P \setminus S_{E}$ for $E$ equal to either $K$ or $L$. Therefore for such values of $m$, \[S_{K}(m) \cap S_{L}(m)  \subseteq \P \setminus S_{K} \cap \P \setminus S_{L}.\] In particular, \[T_{K,L}=\bigcup_{m=0}^{n}S_{K}(m)\cap S_{L}(m) \subseteq \left(\P \setminus S_{K} \cap \P \setminus S_{L}\right) \bigcup \left(S_{K} \cap S_{L}\right).\] Hence, arguing as in the proof of Corollary \ref{MotivaElDelta},
\[\delta(T_{K,L})\leq 2\delta(S_{K}  \cap S_{L} )+1-\delta(S_{K})-\delta(S_{L})=2\delta(S_{\widetilde{K}}  \cap S_{\widetilde{L}} )+1-\delta(S_{\widetilde{K}})-\delta(S_{\widetilde{L}})\]
\[=2\delta(S_{\widetilde{KL}} )+1-\delta(S_{\widetilde{K}})-\delta(S_{\widetilde{L}})=\frac{2}{[\widetilde{K}\widetilde{L}:\Q]} +1 -\frac{1}{[\widetilde{K}:\Q]} -\frac{1}{[\widetilde{L}:\Q]}.\] The last claim follows from Corollary \ref{MotivaElDelta}.
\end{proof}

Motivated by the results we have obtained so far we define the quantity $\delta_{K,L}$.

\begin{definition}
Let $K,L$ be number fields with resp Galois closures $\widetilde{K}$ and $\widetilde{L}$.  Then, \[\delta_{K,L}:= \frac{2}{[\widetilde{K}\widetilde{L}:\Q]} +1 -\frac{1}{[\widetilde{K}:\Q]} -\frac{1}{[\widetilde{L}:\Q]}.\]
\end{definition}

\begin{proposition}
Let $K$ and $L$ be number fields. Then,

\[\frac{1}{[\widetilde{K}\widetilde{L}:\Q]}  \leq \delta_{K,L} \leq 1\]

where  equality on the right  occurs if and only if $\widetilde{K}=\widetilde{L}$ and equality on the left  occurs if and only if either $K$ or $L$ is equal to $\Q$.

\end{proposition}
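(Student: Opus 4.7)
The plan is to reduce the whole statement to elementary algebra in the three positive integers
\[
a := [\widetilde{K}:\Q], \qquad b := [\widetilde{L}:\Q], \qquad c := [\widetilde{K}\widetilde{L}:\Q],
\]
in terms of which $\delta_{K,L} = \frac{2}{c} + 1 - \frac{1}{a} - \frac{1}{b}$. The only Galois-theoretic input I need is that $\widetilde{K}\widetilde{L}$ contains both $\widetilde{K}$ and $\widetilde{L}$, so $a \mid c$ and $b \mid c$; in particular $c \geq \max(a,b)$, with $c = a$ iff $\widetilde{L} \subseteq \widetilde{K}$ and $c = b$ iff $\widetilde{K} \subseteq \widetilde{L}$.

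For the upper bound I would rewrite
\[
1 - \delta_{K,L} \;=\; \Bigl(\frac{1}{a} - \frac{1}{c}\Bigr) + \Bigl(\frac{1}{b} - \frac{1}{c}\Bigr) \;=\; \frac{c-a}{ac} + \frac{c-b}{bc}.
\]
Both summands are nonnegative since $c \geq a$ and $c \geq b$, giving $\delta_{K,L} \leq 1$. Equality forces $c = a$ and $c = b$ simultaneously, which says $\widetilde{K} = \widetilde{K}\widetilde{L} = \widetilde{L}$, i.e., $\widetilde{K} = \widetilde{L}$. The converse is immediate by plugging $a = b = c$ into the formula.

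For the lower bound I would compute
\[
\delta_{K,L} - \frac{1}{c} \;=\; 1 + \frac{1}{c} - \frac{1}{a} - \frac{1}{b}
\]
and split into two cases. If $a \geq 2$ and $b \geq 2$, then $\frac{1}{a} + \frac{1}{b} \leq 1 < 1 + \frac{1}{c}$, so the inequality is strict. If instead $a = 1$, then $\widetilde{K} = \Q$ and, because $K \subseteq \widetilde{K}$, this forces $K = \Q$; in that case $\widetilde{K}\widetilde{L} = \widetilde{L}$ so $c = b$, and direct substitution gives $\delta_{K,L} = \frac{1}{c}$. The case $b = 1$ is symmetric. This shows equality holds on the left precisely when $K = \Q$ or $L = \Q$.

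There is no real obstacle here; the proof is an exercise in algebra with divisibility. The only point to be mindful of is the second case of the lower bound, where one needs to remember that $a = 1$ is equivalent to $K = \Q$ (not just $\widetilde{K} = \Q$, though these coincide via $K \subseteq \widetilde{K}$), so that the characterization of equality matches the statement exactly.
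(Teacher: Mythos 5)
Your proof is correct and complete, but it takes a more self-contained route than the paper's, most notably for the lower bound. The paper deduces $\frac{1}{[\widetilde{K}\widetilde{L}:\Q]} \leq \delta_{K,L}$ by chaining Propositions \ref{PrimerDesigualdadA} and \ref{ALessThanD}, i.e.\ via $\frac{1}{[\widetilde{K}\widetilde{L}:\Q]} \leq \delta(T_{K,L}) \leq \delta_{K,L}$, which rests on Chebotarev's density theorem (and, as stated, on the hypothesis that $K$ and $L$ have the same degree); your argument is pure arithmetic with $a \mid c$, $b \mid c$ and needs no density input and no degree hypothesis. For the equality case on the left, the paper instead rewrites $\delta_{K,L} - \frac{1}{c}$ as the sum of the two non-negative terms $\bigl(\frac{1}{c} - \frac{1}{ab}\bigr)$ and $\bigl(1-\frac{1}{a}\bigr)\bigl(1-\frac{1}{b}\bigr)$ and forces both to vanish, where your case split on $a,b \geq 2$ versus $a=1$ or $b=1$ reaches the same conclusion more directly. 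For the upper bound the two arguments are essentially equivalent: your decomposition $1-\delta_{K,L} = \frac{c-a}{ac} + \frac{c-b}{bc}$ packages the same content as the paper's AM--GM step combined with $ab \leq c^{2}$, and both pin down equality as $\widetilde{K} = \widetilde{K}\widetilde{L} = \widetilde{L}$. Your version has the small advantage of making the proposition independent of the earlier density machinery.
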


\begin{proof}

The inequality of the left follows from Propositions \ref{PrimerDesigualdadA} and \ref{ALessThanD}. Clearly if either $K$ or $L$ is equal to $\Q$ then $\frac{1}{[\widetilde{K}\widetilde{L}:\Q]}  \leq \delta_{K,L}$. Conversely, if   
$\frac{1}{[\widetilde{K}\widetilde{L}:\Q]}  = \delta_{K,L}$ then \[0= \frac{1}{[\widetilde{K}\widetilde{L}:\Q]} +1 -\frac{1}{[\widetilde{K}:\Q]} -\frac{1}{[\widetilde{L}:\Q]}.\] Notice that  \[ \frac{1}{[\widetilde{K}\widetilde{L}:\Q]} +1 -\frac{1}{[\widetilde{K}:\Q]} -\frac{1}{[\widetilde{L}:\Q]}= \left(\frac{1}{[\widetilde{K}\widetilde{L}:\Q]} - \frac{1}{[\widetilde{K}:\Q][\widetilde{L}:\Q]}\right)+ \left(1- \frac{1}{[\widetilde{K}:\Q]}\right)\left(1- \frac{1}{[\widetilde{L}:\Q]}\right). \] Since both terms in the sum are non-negative it follows that they are both equal to $0$ which implies that either $K$ or $L$ is $\Q$. To show the remaining part notice that \[[\widetilde{K}:\Q][\widetilde{L}:\Q] \leq [\widetilde{KL}:\Q]^{2}\] and that equality occurs if and only if $\widetilde{K}=\widetilde{L}$. It follows from the AM-GM inequality that \[\frac{2}{[\widetilde{KL}:\Q]} \leq \frac{1}{[\widetilde{K}:\Q]} +\frac{1}{[\widetilde{L}:\Q]}\] with equality if and only if $\widetilde{K}=\widetilde{L}$. This is equivalent $\delta_{K,L} \leq 1$ and the claim about the occurrence of equality.
\end{proof}

\section{Naive approach}

Having defined $\delta_{K,L}$ we start a first approach to answer Question \ref{HayCota} based on its relation to $\delta(A_{K,L})$ and to similar quantities we define next.

\begin{remark}
Notice that $A_{K,L} \subseteq S_{K,L} \cap T_{K,L}$. In particular, $\delta(A_{K,L}) \leq \min \{  \delta(S_{K,L}),  \delta(T_{K,L}) \}$.
\end{remark}

\begin{corollary}\label{InequalityADelta}

Let $K$ and $L$ be degree $n$ number fields. Then, \[\delta(A_{K,L}) \leq \delta_{K,L}.\] 

\end{corollary}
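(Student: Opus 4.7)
The plan is to observe that this corollary is essentially an immediate consequence of chaining two facts that have already been established in the excerpt, so my approach would be to make both links explicit.

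First I would use the containment recorded in the remark immediately preceding the statement: since $A_{K,L}\subseteq S_{K,L}\cap T_{K,L}$, Dirichlet density is monotone on sets (at least where both densities exist), so in particular
\[
\delta(A_{K,L})\leq \delta(T_{K,L}).
\]
Strictly speaking I should note that $T_{K,L}$ has a Dirichlet density: this is visible from the decomposition $T_{K,L}=\bigcup_{m=0}^{n} S_{K}(m)\cap S_{L}(m)$ appearing before Proposition \ref{ALessThanD}, each of whose pieces has a density by Chebotarev applied to $\widetilde{K}\widetilde{L}/\Q$, so the monotonicity step is legitimate.

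Next I would invoke Proposition \ref{ALessThanD}, which gives
\[
\delta(T_{K,L}) \leq \frac{2}{[\widetilde{K}\widetilde{L}:\Q]} + 1 - \frac{1}{[\widetilde{K}:\Q]} - \frac{1}{[\widetilde{L}:\Q]},
\]
and the right-hand side is exactly the definition of $\delta_{K,L}$. Concatenating the two inequalities yields $\delta(A_{K,L})\leq \delta_{K,L}$, which is the desired bound.

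There is no real obstacle here; the only thing one could worry about is ensuring that the densities invoked actually exist (rather than only upper/lower densities), but this is guaranteed by the Chebotarev-based computation underlying Proposition \ref{ALessThanD}. The substantive content of the inequality is really carried by Proposition \ref{ALessThanD}; the corollary just transports the bound from $T_{K,L}$ to the smaller set $A_{K,L}$.
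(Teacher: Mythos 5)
Your proof is correct and matches the paper's argument exactly: the paper likewise combines the containment $A_{K,L}\subseteq S_{K,L}\cap T_{K,L}$ (giving $\delta(A_{K,L})\leq\delta(T_{K,L})$) with Proposition \ref{ALessThanD}. The extra remark about the existence of the relevant Dirichlet densities is a reasonable piece of diligence but not something the paper spells out.
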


\begin{proof}
Since $\delta(A_{K,L}) \leq \delta(T_{K,L})$ the result follows from Proposition \ref{ALessThanD}.
\end{proof}

Some of the classical results on Arithmetic equivalence can be phrased in terms of the above:

\begin{theorem}\label{AEGalRep}
Let $K$ and $L$ be number fields. The following are equivalent:

\begin{itemize}

\item[(i)] $\displaystyle \zeta_{K}(s)=\zeta_{L}(s)$

\item[(ii)] $\delta(A_{K,L})=1$

\item[(iii)] $\delta(T_{K,L})=1$

\item[(iv)]  $\delta(S_{K,L})=1$.

\end{itemize}

\end{theorem}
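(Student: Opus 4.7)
The equivalence (i)$\iff$(ii) is Perlis' Theorem~\ref{Perlis}: condition (c) there is exactly $\delta(A_{K,L})=1$, since ramified primes form a density-zero set. The implications (ii)$\Rightarrow$(iii) and (ii)$\Rightarrow$(iv) follow at once from the inclusions $A_{K,L}\subseteq T_{K,L}$ and $A_{K,L}\subseteq S_{K,L}$ noted in the preceding remark. What remains is to show (iii)$\Rightarrow$(i) and (iv)$\Rightarrow$(i), which I plan to do via Chebotarev after first reducing in each case to $[K:\Q]=[L:\Q]$ and $\widetilde{K}=\widetilde{L}$.

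For (iii)$\Rightarrow$(i), if $[L:\Q]>[K:\Q]$ then the positive-density set of primes splitting completely in $L$ forces $a_p(L)=[L:\Q]>a_p(K)$, contradicting $\delta(T_{K,L})=1$; hence $[K:\Q]=[L:\Q]=:n$. Proposition~\ref{ALessThanD} then gives $1=\delta(T_{K,L})\leq\delta_{K,L}\leq 1$, and the equality characterization established just after the definition of $\delta_{K,L}$ forces $\widetilde{K}=\widetilde{L}$. Writing $G=\gal(\widetilde{K}/\Q)$ and $H_K,H_L$ for the subgroups fixing $K,L$, one has $a_p(E)=\chi_E(\mathrm{Frob}_p)$ for the permutation character $\chi_E$ of $G$ acting on $G/H_E$ at any unramified $p$. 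Since each conjugacy class $C\subseteq G$ carries a Frobenius set of positive density $|C|/|G|$ by Chebotarev, the hypothesis $\delta(T_{K,L})=1$ upgrades to the pointwise identity $\chi_K=\chi_L$ on $G$; hence $\mathrm{Ind}_{H_K}^G\mathbf{1}=\mathrm{Ind}_{H_L}^G\mathbf{1}$, which gives $\zeta_K=\zeta_L$ via Artin factorization.

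For (iv)$\Rightarrow$(i), the same split-prime argument gives $[K:\Q]=[L:\Q]=:n$. With equal degrees, $\#A_p(E)=n$ iff $p\in S_E=S_{\widetilde{E}}$, so $\delta(S_{K,L})=1$ yields $S_{\widetilde{K}}\triangle S_{\widetilde{L}}$ of density zero; combined with Lemma~\ref{SplittingPrimesGaloisClosure} and the identity $\delta(S_{\widetilde{K}\widetilde{L}})=1/[\widetilde{K}\widetilde{L}:\Q]$, this gives $[\widetilde{K}\widetilde{L}:\Q]=[\widetilde{K}:\Q]=[\widetilde{L}:\Q]$, i.e., $\widetilde{K}=\widetilde{L}$. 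Letting $G$, $H_K$, $H_L$ be as before and setting $\omega_E(g):=$ the number of orbits of $\langle g\rangle$ on $G/H_E$, one has $\#A_p(E)=\omega_E(\mathrm{Frob}_p)$, so Chebotarev upgrades $\delta(S_{K,L})=1$ to the pointwise identity $\omega_K\equiv\omega_L$ on $G$. It remains to extract the full cycle type: if $n_{d,E}(g)$ denotes the number of $\langle g\rangle$-orbits on $G/H_E$ of size $d$, a direct count gives
\[
\omega_E(g^e)=\sum_{d\mid m}n_{d,E}(g)\,\gcd(d,e)\qquad(e\mid m:=|g|),
\]
and the divisor matrix $\bigl(\gcd(d,e)\bigr)_{d,e\mid m}$ is invertible (its determinant equals $\prod_{d\mid m}\varphi(d)$, via the decomposition $\gcd(d,e)=\sum_{f\mid d,\,f\mid e}\varphi(f)$). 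Applying $\omega_K(g^e)=\omega_L(g^e)$ for every $e\mid m$ forces $n_{d,K}(g)=n_{d,L}(g)$ for every $d\mid m$; in particular $\chi_K(g)=n_{1,K}(g)=n_{1,L}(g)=\chi_L(g)$, so $\chi_K=\chi_L$ and hence $\zeta_K=\zeta_L$.

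The genuine obstacle is the last step of (iv)$\Rightarrow$(i): turning a single scalar invariant (the number of prime factors of each $p$) into full cycle-type information via the small linear-algebra inversion above. Everything else reduces to a direct application of Chebotarev together with the results already assembled in the paper.
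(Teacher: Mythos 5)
Your proposal is correct, but it takes a genuinely different route: the paper disposes of the two nontrivial implications by citation --- (iii)$\Rightarrow$(i) to \cite{Manti} and (iv)$\Rightarrow$(i) to \cite{Manti1} or \cite{PerStu} --- whereas you supply complete arguments. Your (iii)$\Rightarrow$(i) is the standard one (and essentially what the cited reference does): reduce to equal degrees via split primes, deduce $\widetilde{K}=\widetilde{L}$ from $\delta_{K,L}=1$, and use Chebotarev to upgrade the density-one identity $a_p(K)=a_p(L)$ to the pointwise equality of permutation characters $\chi_K=\chi_L$, hence of the induced representations and their $L$-functions. Your (iv)$\Rightarrow$(i) is where you genuinely diverge: you reprove the Perlis--Stuart criterion from scratch by observing that $\#A_p(E)$ is the orbit count $\omega_E(\mathrm{Frob}_p)$, upgrading $\delta(S_{K,L})=1$ to $\omega_K\equiv\omega_L$ on $G$, and then recovering the full cycle type from the orbit counts of all powers $g^e$ by inverting Smith's matrix $\bigl(\gcd(d,e)\bigr)_{d,e\mid m}$, whose determinant $\prod_{d\mid m}\varphi(d)$ is nonzero; this replaces the ``$\ell$-$p$ switch trick'' of the cited reference with a clean linear-algebra inversion. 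The formula $\omega_E(g^e)=\sum_{d\mid m}n_{d,E}(g)\gcd(d,e)$ and the factorization $\gcd(d,e)=\sum_{f\mid d,\ f\mid e}\varphi(f)$ both check out, and in fact you obtain $A_p(K)=A_p(L)$ for all unramified $p$, which is stronger than the equality of characters you extract. What your approach buys is self-containedness (the theorem no longer rests on external results beyond Chebotarev and Artin formalism, both already set up in the paper); what the paper's approach buys is brevity.
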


\begin{proof}

The implication (i) $\Rightarrow$ (ii) follows from equality of coefficients in convergent Dirichlet series. The implications (i) $\Rightarrow$ (iii), (iv) follow from the remark above. For (iii) $\Rightarrow$ (i) see \cite{Manti}. Lastly, for   (vi) $\Rightarrow$ (i) see \cite{Manti1} or \cite{PerStu}.
\end{proof}

\begin{lemma}\label{SalvadaPrimeDegreeGalois}
Let $K,L$ be Galois number fields of the same prime degree. Let $p$ be a rational prime not ramified in either field. Then, $a_{p}(K)=a_{p}(L)$ if and only if $A_{p}(K)=A_{p}(L)$. In particular, \[\delta(A_{K,L})=\delta_{K,L}.\]
\end{lemma}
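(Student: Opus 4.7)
The plan is to exploit the very restrictive shape of splitting in Galois extensions of prime degree: only two splitting types are possible for unramified primes, and they are distinguished already by the single invariant $a_p$.

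More precisely, let $q$ denote the common prime degree of $K$ and $L$, and let $p$ be a rational prime unramified in both. Since $K/\Q$ is Galois, the decomposition group at any prime of $K$ above $p$ is cyclic of order $efg$-independent value dividing $q$. Because $q$ is prime, this order is either $1$ or $q$. Hence $p$ is either totally split in $K$, giving $A_p(K) = (1,1,\ldots,1)$ and $a_p(K)=q$, or inert, giving $A_p(K) = (q)$ and $a_p(K)=1$. The same dichotomy holds in $L$. In particular, the arithmetic type is recovered from $a_p$ via
\[
A_p(K) = \begin{cases} (1,\ldots,1) & \text{if } a_p(K) = q,\\ (q) & \text{if } a_p(K)=1, \end{cases}
\]
and likewise for $L$. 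Thus $a_p(K)=a_p(L)$ immediately implies $A_p(K)=A_p(L)$, while the converse is trivial (the arithmetic type determines $a_p$ as the number of entries equal to $1$, which in this case equals the length of the tuple or $0$). This proves the first assertion.

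For the density statement, the equivalence just established says exactly that $A_{K,L} = T_{K,L}$ as subsets of the unramified primes, so $\delta(A_{K,L})=\delta(T_{K,L})$. Since $K$ and $L$ are Galois, the equality clause in Proposition \ref{ALessThanD} applies and yields $\delta(T_{K,L}) = \delta_{K,L}$, completing the proof.

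There is no genuine obstacle here; the entire argument rests on the prime-degree Galois dichotomy between totally split and inert primes. The only thing one needs to be careful about is keeping track of the exclusion of ramified primes, which is already built into the definitions of $A_{K,L}$ and $T_{K,L}$, so the identification $A_{K,L}=T_{K,L}$ is literal rather than only up to a set of density zero.
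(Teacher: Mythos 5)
Your proof is correct and follows essentially the same route as the paper: the prime-degree Galois dichotomy (totally split versus inert) shows that $a_p$ determines $A_p$, hence $A_{K,L}=T_{K,L}$, and the equality clause of Proposition \ref{ALessThanD} for Galois fields gives $\delta(T_{K,L})=\delta_{K,L}$. One small slip: for an inert prime you wrote $a_p(K)=1$, but since the unique prime above $p$ has norm $p^{q}$ there are no ideals of norm $p$, so $a_p(K)=0$ (consistent with your own later remark that $a_p$ counts the entries equal to $1$ in the arithmetic type); this does not affect the argument, since only the dichotomy of values matters.
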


\begin{proof}
Let $\ell$ be the common degree. Since both extensions are Galois we have that either $a_{p}(K)=a_{p}(L)=\ell$ or $a_{p}(K)=a_{p}(L)=0$. The former happens if and only if $p$ splits equivalently if and only if $A_{p}(K)=A_{p}(L)=(1,1,...,1)$. Since $\ell$ is prime the later occurs if and only there is only one prime laying over $p$ in either field, i.e., if and only if  $A_{p}(K)=A_{p}(L)=(\ell)$.  It follows that $\delta(A_{K,L})=\delta(T_{K,L})$, which concludes the proof thanks to the second part of Proposition \ref{ALessThanD}.
\end{proof}

These results about $ \delta_{K,L}$ give us a first clue on what to expect of an answer to Question \ref{HayCota}. Next we show some candidates for bounds in degrees $2$ and $3$.

\begin{theorem}\label{PrimeroQuadratics}

Let $\ell$ be a prime and let $K,L$ be Galois number fields of degree $\ell$. Suppose that $\delta(A_{K,L}) > 1-\frac{2}{\ell^2}$. Then, $K$ and $L$ are arithmetically equivalent. Furthermore, if $\ell=2$ and $\delta(A_{K,L}) = 1-\frac{1}{2^2}=\frac{1}{2}$ then $K$ and $L$ are not arithmetically equivalent.

\end{theorem}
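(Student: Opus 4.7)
The plan is to exploit Lemma \ref{SalvadaPrimeDegreeGalois}, which collapses the density $\delta(A_{K,L})$ to the purely combinatorial quantity $\delta_{K,L}$ under exactly the hypotheses we have on $K$ and $L$. Once this identification is made, the entire theorem reduces to a direct computation of $\delta_{K,L}$ in two cases.

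Since $K$ and $L$ are Galois of prime degree $\ell$, we have $\widetilde{K}=K$ and $\widetilde{L}=L$. The intersection $K\cap L$ is a subfield of $K$, so its degree over $\Q$ divides the prime $\ell$, forcing $K\cap L\in\{\Q,K\}$. Thus either $K=L$, in which case $[KL:\Q]=\ell$ and the defining formula yields $\delta_{K,L}=1$; or $K\cap L=\Q$ and the fact that both extensions are Galois gives $[KL:\Q]=\ell^2$, so
\[
\delta_{K,L}=\frac{2}{\ell^2}+1-\frac{2}{\ell}=1-\frac{2(\ell-1)}{\ell^2}.
\]
For every prime $\ell\geq 2$ one has $\ell-1\geq 1$, hence $1-\frac{2(\ell-1)}{\ell^2}\leq 1-\frac{2}{\ell^2}$, with equality precisely at $\ell=2$.

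To prove the main implication, I assume $\delta(A_{K,L})>1-\frac{2}{\ell^2}$. By Lemma \ref{SalvadaPrimeDegreeGalois} this reads $\delta_{K,L}>1-\frac{2}{\ell^2}$, which by the above computation rules out the case $K\neq L$. Hence $K=L$ and the fields are (trivially) arithmetically equivalent. For the \emph{furthermore} claim, I take $\ell=2$ and $\delta(A_{K,L})=1/2$. If $K$ and $L$ were arithmetically equivalent, then by Proposition \ref{LasClausurasDelChamoGalois} their Galois closures would coincide, and since both fields are already Galois this would force $K=L$, yielding $\delta(A_{K,L})=\delta_{K,L}=1\neq 1/2$, a contradiction.

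There is no genuine obstacle in this argument; the entire content is concentrated in Lemma \ref{SalvadaPrimeDegreeGalois}, which promotes the general inequality of Corollary \ref{InequalityADelta} to an equality under the Galois prime-degree hypothesis. What is really going on is that the ``prime degree Galois'' setting allows only the two extreme splitting behaviors (total split or inert), so the arithmetic type is determined by the splitting degree alone, and $\delta_{K,L}$ becomes an exact invariant rather than a bound.
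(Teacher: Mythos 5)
Your proof is correct and follows essentially the same route as the paper: reduce to the computation of $\delta_{K,L}$, use that distinct prime-degree Galois fields are linearly disjoint so $[KL:\Q]=\ell^2$, and conclude $\delta_{K,L}\leq 1-\tfrac{2}{\ell^2}$ when $K\neq L$. The only cosmetic differences are that you invoke the equality of Lemma \ref{SalvadaPrimeDegreeGalois} where the paper only needs the inequality of Corollary \ref{InequalityADelta} for the main implication, and for the \emph{furthermore} part you give the direct (and logically sufficient) contradiction, whereas the paper instead verifies the sharpness statement that non-isomorphic quadratic fields actually attain $\delta(A_{K,L})=\tfrac12$.
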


\begin{proof}

Since any Galois number field is solitary, i.e., any other field with the same Dedekind zeta function is isomorphic to it, we must prove that the hypothesis implies that $K$ and $L$ are isomorphic. By Corollary \ref{InequalityADelta} we have that $\delta_{K,L}>1-\frac{2}{\ell^2}$. Thus, \[ \frac{2}{[KL:\Q]} +1 -\frac{1}{\ell} -\frac{1}{\ell} >1-\frac{2}{\ell^2},\] equivalently $\displaystyle \frac{\ell^2}{\ell-1}>[KL:\Q]$. This last inequality cannot be unless $K$ and $L$ are the same field, otherwise they would be linearly disjoint. To show the last part, notice that if $K$ and $L$ are not isomorphic then  $\delta_{K,L}=\frac{2}{4}+1 -\frac{1}{2} -\frac{1}{2} =\frac{1}{2}$. Since both fields are Galois of prime degree, $\delta(A_{K,L})=\delta_{K,L}$ thanks to Lemma \ref{SalvadaPrimeDegreeGalois}.
\end{proof}

\begin{theorem}\label{PrimeroCubics}

Let $K,L$ be cubic fields. Suppose that $\delta(A_{K,L}) > \frac{13}{18}$. Then, $K$ and $L$ are arithmetically equivalent.

\end{theorem}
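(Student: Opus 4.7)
My approach is to apply Corollary \ref{InequalityADelta}: the hypothesis $\delta(A_{K,L}) > 13/18$ forces $\delta_{K,L} > 13/18$. From there the proof reduces to a finite case analysis on the triple $\bigl([\widetilde{K}:\Q],\,[\widetilde{L}:\Q],\,[\widetilde{K}\widetilde{L}:\Q]\bigr)$, since for a cubic field the Galois closure has degree $3$ (cyclic case) or $6$ (with Galois group $S_3$). In each case I substitute the degrees into the definition of $\delta_{K,L}$ and check whether the resulting value can exceed $13/18$ without forcing $K$ and $L$ to be arithmetically equivalent.

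The easier configurations dispose quickly. If both $K$ and $L$ are Galois and distinct, then $[\widetilde{K}\widetilde{L}:\Q]=9$ and $\delta_{K,L}=5/9<13/18$. If exactly one of them is Galois, say $K$, then $S_3$ has no normal subgroup of index $2$, so $\widetilde{L}$ contains no Galois cubic subfield; in particular $\widetilde{K}\not\subseteq\widetilde{L}$, forcing $[\widetilde{K}\widetilde{L}:\Q]=18$ and $\delta_{K,L}=11/18<13/18$. Both cases are excluded by the hypothesis. When both $K$ and $L$ are non-Galois and $\widetilde{K}=\widetilde{L}$, the fields correspond to index-$3$ subgroups of $\gal(\widetilde{K}/\Q)\cong S_3$; since all three such subgroups are conjugate, one obtains $K\cong L$ and the arithmetic equivalence is immediate. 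If instead $\widetilde{K}\neq\widetilde{L}$ with both non-Galois, then $\widetilde{K}\cap\widetilde{L}$ is a Galois subfield of each $S_3$-closure, whose only Galois subfields are $\Q$, the quadratic resolvent, and the full closure; hence $\widetilde{K}\cap\widetilde{L}$ is either $\Q$ or a common quadratic field. The trivial-intersection sub-case yields $[\widetilde{K}\widetilde{L}:\Q]=36$ and $\delta_{K,L}=13/18$ exactly, which the strict inequality rules out.

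The hard part is the remaining sub-case: $K$ and $L$ are non-Galois cubic fields that share a common quadratic resolvent, in which case $[\widetilde{K}\widetilde{L}:\Q]=18$ and $\delta_{K,L}=7/9>13/18$. Here Corollary \ref{InequalityADelta} is no longer decisive, and I would have to compute $\delta(A_{K,L})$ itself via Chebotarev applied to the fibered product $\gal(\widetilde{K}\widetilde{L}/\Q)\cong S_3\times_{\Z/2\Z}S_3$ of order $18$, counting those pairs $(a,b)$ in this group whose components share a common cycle type in $S_3$. Closing the argument in this last configuration — either by a sharper density computation going beyond the bound $\delta(A_{K,L})\le\delta_{K,L}$, or by an auxiliary arithmetic input that distinguishes non-isomorphic cubics with the same quadratic resolvent — is precisely where the main technical obstacle lies.
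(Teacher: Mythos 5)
Your case analysis follows the paper's strategy (reduce to $\delta_{K,L}$ via Corollary \ref{InequalityADelta} and use that cubic fields are solitary), but it is more careful than the paper's own proof, which disposes of the non-Galois case with the single computation $\delta_{K,L}=\frac{1}{18}+1-\frac{2}{6}=\frac{13}{18}$, i.e.\ it tacitly assumes $[\widetilde{K}\widetilde{L}:\Q]=36$. You correctly isolate the configuration this overlooks: two non-isomorphic, non-Galois cubics sharing a quadratic resolvent but with distinct Galois closures, where $[\widetilde{K}\widetilde{L}:\Q]=18$ and $\delta_{K,L}=\frac{2}{18}+1-\frac{1}{6}-\frac{1}{6}=\frac{7}{9}>\frac{13}{18}$, so the upper bound $\delta(A_{K,L})\le\delta_{K,L}$ is no longer decisive.

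However, you stop short of the Chebotarev computation you yourself propose, and that computation settles the remaining case --- against the statement. In $G=S_3\times_{\Z/2\Z}S_3$ (order $18$), the pairs $(a,b)$ whose components have the same cycle type are: the identity pair ($1$), transposition--transposition pairs ($3\cdot 3=9$, all compatible with the sign condition), and pairs of $3$-cycles ($2\cdot 2=4$); in total $14$, so $\delta(A_{K,L})=\frac{14}{18}=\frac{7}{9}>\frac{13}{18}$. This configuration is realized, for instance, by $K=\Q(\sqrt[3]{2})$ and $L=\Q(\sqrt[3]{3})$, which share the resolvent $\Q(\sqrt{-3})$ and have distinct Galois closures; since non-isomorphic cubic fields are never arithmetically equivalent, the theorem with the constant $\frac{13}{18}$ is false. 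The correct sharp threshold in degree $3$ is $\frac{7}{9}=1-\frac{2}{3^2}$, consistent with the paper's conjectured bound $\frac{2}{n^2}$ but not with Theorem \ref{PrimeroCubics} as stated. So your proposal cannot be completed into a proof of this statement: the ``main technical obstacle'' you flag is not a gap to be bridged but a genuine counterexample, and the same omission invalidates the paper's own argument.
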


\begin{proof}

If we show that $\delta_{K,L} \leq \frac{13}{18}$ whenever $K$ and $L$ are not isomorphic, the result follows from Corollary \ref{InequalityADelta} and from the fact that cubic fields are solitary. We divide the proof of $\delta_{K,L} \leq \frac{13}{18}$ in three cases:

\begin{itemize}

\item Both $K$ and $L$ are Galois: in such case $\delta_{K,L} =\frac{2}{9}+1-\frac{2}{3}=\frac{5}{9}$.

\item One of them is Galois and the other is not:  in such case $\delta_{K,L} =\frac{1}{9}+1-\frac{1}{6} -\frac{1}{3}=\frac{11}{18}$.

\item Neither of them is Galois:  in such case $\delta_{K,L} =\frac{1}{18}+1-\frac{2}{6}=\frac{13}{18}$.

\end{itemize}

\end{proof}

\subsubsection{A heuristic argument for a bound depending only on the degree.}

As we saw in Theorems \ref{PrimeroQuadratics}, \ref{PrimeroCubics}, $\delta_{K,L}$ played an important role in answering Question \ref{HayCota} in degrees $2$ and $3$.  For now on let us fix the degree $n$ of the number fields in question. In both theorems above we saw that there is a constant $b_{n}$, for $n=2,3$, such that if $\delta(A_{K,L}) > b_{n}$ then $K$ and $L$ are arithmetically equivalent. The argument in both cases was the same; $\delta(A_{K,L}) > b_{n}$ implies that $\delta_{K,L}  > b_{n}$ and, thanks to that in this specific situation the fields are solitary, arithmetic equivalence between them is equivalent to prove isomorphism. This suggests a strategy to look for a constant $b_{n}$ for all $n$. Since $\delta_{K,L} = \delta(T_{K,L})$ for $K$ and $L$ Galois, and Galois fields are solitary, we could try and see if there is a constant $b_{n}$ such that $\delta_{K,L} > b_{n}$ implies that $K$ and $L$ are isomorphic(in fact the same since they are Galois). Based on the two theorems above, and similar calculations done in Galois quartic fields, we observe that \[b_{n}=1-\frac{2}{n^2}\] is a good candidate. The following theorem confirms our choice.

\begin{theorem}

Let $n$ be positive integer and let $K$ and $L$ be Galois number fields of degree $n$.  Then,  \[\delta_{K,L} > 1-\frac{2}{n^2} \Longrightarrow K=L.\]

\end{theorem}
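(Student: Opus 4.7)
The plan is to unwind the definition of $\delta_{K,L}$ using the fact that both $K$ and $L$ are already their own Galois closures, turn the hypothesis into an inequality on the degree of the compositum $KL$, and then invoke the classical formula for the degree of a compositum of Galois extensions to force $K=L$.

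First I would substitute $\widetilde{K}=K$ and $\widetilde{L}=L$ into the definition of $\delta_{K,L}$ to obtain
\[
\delta_{K,L}=\frac{2}{[KL:\Q]}+1-\frac{2}{n}.
\]
The hypothesis $\delta_{K,L}>1-\tfrac{2}{n^{2}}$ then simplifies (just rearranging and clearing the factor $2$) to
\[
[KL:\Q]<\frac{n^{2}}{n-1}.
\]
This is the whole content of the hypothesis translated into a statement about the compositum.

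Next I would use the standard fact that when both $K/\Q$ and $L/\Q$ are Galois, the compositum satisfies
\[
[KL:\Q]=\frac{[K:\Q]\,[L:\Q]}{[K\cap L:\Q]}=\frac{n^{2}}{[K\cap L:\Q]}.
\]
Plugging this into the previous inequality gives $[K\cap L:\Q]>n-1$. Since $K\cap L\subseteq K$ forces $[K\cap L:\Q]\le n$, the only possibility is $[K\cap L:\Q]=n$, and therefore $K\cap L=K=L$.

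There is really no obstacle here; the argument is purely formal once one recognizes that in the Galois case $\delta_{K,L}$ is controlled by a single quantity $[K\cap L:\Q]$, and that this quantity is an integer in $\{1,\dots,n\}$. The threshold $1-\tfrac{2}{n^{2}}$ is chosen precisely so that the resulting bound on $[KL:\Q]$ lies strictly between the two possible values $n$ and $\tfrac{n^{2}}{d}$ for any proper divisor $d<n$ of $n$, which is why equality in the theorem is sharp (as already observed in the quadratic case of Theorem~\ref{PrimeroQuadratics}).
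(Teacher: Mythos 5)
Your argument is correct and takes essentially the same route as the paper: both reduce the hypothesis to the inequality $[KL:\Q]<\frac{n^{2}}{n-1}$ and then exploit integrality of degrees in the Galois setting --- you via the compositum formula and $[K\cap L:\Q]$, the paper via the fact that $[KL:K]$ is a divisor of $n$. (The only quibble is that dividing by $n-1$ silently assumes $n\ge 2$; the case $n=1$ is vacuous since then $K=L=\Q$.)
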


\begin{proof}

Since $K,L$ are degree n Galois we have by definition that $\displaystyle \delta_{K,L} =\frac{2}{[KL:\Q]}-\frac{2}{n}+1$. Therefore, after multiplying by $\frac{n^2}{2}$ and reordering,  $\delta_{K,L} > 1-\frac{2}{n^2}$ is equivalent to \[1 > \frac{n}{[KL:K]}([KL:K] -1).\] Since both extensions are Galois of degree $n$,  $[KL:K]$ is a divisor of $n$ hence the two factors on the right are non-negative integers which implies that $[KL:K]=1$, and thus $K=L$.
\end{proof}

\begin{remark}
Notice that $b_{3}=\frac{7}{9}$ is bigger that $\frac{13}{18}$, the constant found in Theorem \ref{PrimeroCubics},  hence it looks that there might be some room for improvement for the expression of $b_{n}$.  This seems not to be the case since for $n=2$ the constant $b_{2}$ is sharp.
\end{remark}

\section{Geometric approach}

In the previous section we proved that for degrees up to $n=3$  \[  \delta(A_{K,L}) >1-\frac{2}{n^2} \iff \zeta_{K}(s)=\zeta_{L}(s).\] This in fact can be improved to $n=4$ but requires a lot of ad hoc calculations depending on the behavior  of the Galois closures of quartic fields. Here we take a different approach to obtain a similar bound as the above, not as strong, but with the great advantage that the bound works for all $n$.\\

 The geometric approach we use here is to look at $\zeta_{K}(s)$ as the Weil-Zeta function of the Scheme $X_{K}:= {\rm Spec}(O_{K})$.  With this natural interpretation one can see, see \cite[Theorem 2.5]{Manti}, that the density of the set of primes $p$ for which $\#X_{K}(\F_{p})=\#X_{L}(\F_{p})$ is the same as $\delta(T_{K,L})$. Since \[\delta(A_{K,L}) \leq \delta(T_{K,L})\]
studying $\#X_{K}(\F_{p})=\#X_{L}(\F_{p})$ will gives us a way to obtain upper bounds for $\delta(A_{K,L})$. The following theorem is precisely the kind of result that we are looking for.

\begin{theorem}\label{ElDeSerre}\cite[Theorem 6.17]{Serre}

Let $X_{1}, X_{2}$ be separated schemes of finite type over $\Z$. Let S be a finitely set of primes such that $X_{1}$ and $X_{2}$ have good reduction away from $S$. Suppose that  $\#X_{1}(\F_{p})\neq \#X_{2}(\F_{p})$ for at least one prime $p \notin S$. Then, \[ \delta\left( \{ p: \#X_{1}(\F_{p})= \#X_{2}(\F_{p}) \} \right) \leq  1-\frac{1}{B^2}\] where $B:=B(X_{1})+B(X_{2})$ and \[B(X_{i})=\sum_{j} \dim_{\Q} {\rm H}^{j}_{c} (X_{i}(\C),\Q) \ \mbox{for} \ i=1,2. \]

\end{theorem}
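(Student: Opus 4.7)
The plan is to pass from point counts on the reductions of $X_1$ and $X_2$ to traces of Frobenius on a single virtual $\ell$-adic Galois representation, and then combine a trivial trace bound with Chebotarev-type equidistribution. Fix an auxiliary prime $\ell \notin S$; by the Grothendieck--Lefschetz trace formula, for every $p \notin S \cup \{\ell\}$ one has
\[
\#X_i(\F_p) \;=\; \sum_{j\ge 0}(-1)^j \operatorname{tr}\bigl(\operatorname{Frob}_p \mid H^j_c(X_{i,\bar{\F}_p},\Q_\ell)\bigr),
\]
and the good-reduction hypothesis, via smooth base change and the comparison with Betti cohomology, packages these $H^j_c$ into continuous $\ell$-adic Galois representations unramified outside $S\cup\{\ell\}$ whose total dimension equals $B(X_i)$.

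Forming the virtual representation $\sigma := \sum_j (-1)^j [H^j_c(X_1,\Q_\ell)] - \sum_j (-1)^j [H^j_c(X_2,\Q_\ell)]$ and collecting parities, write $\sigma = \sigma^+ - \sigma^-$ as a difference of genuine representations with $\dim\sigma^+ + \dim\sigma^- = B(X_1)+B(X_2) = B$. Its character $\chi_\sigma$ satisfies $\chi_\sigma(\operatorname{Frob}_p) = \#X_1(\F_p) - \#X_2(\F_p)$. Let $G$ denote the image of $\gal(\bar{\Q}/\Q)$ under $\sigma^+\oplus\sigma^-$, which is a compact group; after the necessary normalization of Frobenius (via Deligne's purity, or directly when $G$ is finite), one obtains the uniform bound $|\chi_\sigma(g)|\le B$ for every $g\in G$.

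The hypothesis that $\#X_1(\F_p)\neq\#X_2(\F_p)$ for some $p\notin S$ forces $\chi_\sigma\not\equiv 0$. Decomposing $\chi_\sigma=\sum_i n_i\chi_i$ into irreducible characters of $G$ and using Schur orthogonality with respect to the Haar probability measure $\mu$ on $G$ gives $\int_G|\chi_\sigma|^2\,d\mu=\sum_i n_i^2\ge 1$. Combined with the pointwise estimate this produces
\[
\mu\bigl(\{g\in G:\chi_\sigma(g)\neq 0\}\bigr) \;\ge\; \frac{\int_G |\chi_\sigma|^2 \, d\mu}{\sup_G |\chi_\sigma|^2} \;\ge\; \frac{1}{B^2},
\]
and Chebotarev equidistribution for $\ell$-adic representations (Frobenius classes are $\mu$-equidistributed in $G$) transfers this to $\delta\bigl(\{p:\#X_1(\F_p)\neq\#X_2(\F_p)\}\bigr)\ge 1/B^2$, which is equivalent to the stated upper bound.

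The main obstacle is making the uniform estimate $|\chi_\sigma(g)|\le B$ genuinely pointwise on the full compact image $G$: naively Frobenius eigenvalues on $H^j_c$ grow like $p^{j/2}$ by Deligne, so one must either pass to an appropriate Tate twist that unitarizes the action or exploit the compactness of $G$ in the profinite topology to replace the Frobenius classes by their images in a bounded subquotient. In the applications to arithmetic equivalence treated in this paper, $X_i=\operatorname{Spec}(O_{K_i})$ gives rise to a finite permutation representation on the cosets of $\gal(\widetilde{K}/K_i)$ inside $\gal(\widetilde{K}/\Q)$, so $G$ is already finite and both the trace bound and the Chebotarev step reduce to classical character theory and the original Chebotarev density theorem.
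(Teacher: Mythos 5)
First, a point of comparison: the paper does not prove this statement at all --- it is imported as \cite[Theorem 6.17]{Serre} and used as a black box --- so your attempt can only be measured against Serre's own argument. Its skeleton (Lefschetz trace formula, a nonzero virtual character $\chi_\sigma$ with second moment $\geq 1$ by integrality of multiplicities, a sup-bound $|\chi_\sigma|\leq B$, and a Chebotarev equidistribution step) is exactly what you have reproduced, and you correctly locate the source of the constant $1/B^2$ in the tension between the two estimates.

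However, the step you yourself flag as ``the main obstacle'' is a genuine gap, not a technicality, and neither of your proposed repairs closes it. The pointwise bound $|\chi_\sigma(g)|\leq B$ is false as stated: $H^j_c$ is mixed of weights up to $j$, so already on Frobenius elements one has, e.g., $N_{\mathbb{A}^1}(p)=p$ while $B(\mathbb{A}^1)=1$. A single Tate twist cannot unitarize all the $H^j_c$ at once (their weights differ), and twisting changes the function being computed, so it no longer returns $\#X_i(\F_p)$. Worse, $\chi_\sigma$ takes values in $\Q_\ell$ on the compact $\ell$-adic image $G$, so neither $\sup_G|\chi_\sigma|$ nor $\int_G|\chi_\sigma|^2\,d\mu$ (an archimedean moment of an $\ell$-adic--valued function) is defined without further work, and Schur orthogonality in the form $\sum_i n_i^2\geq 1$ is a statement about complex, i.e.\ unitarizable, representations. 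The missing idea is to restore finiteness: one must descend to finite Galois quotients (e.g.\ via torsion coefficients $\Z/\ell^n$, choosing $\ell^n$ large enough that the nonvanishing of $\#X_1(\F_{p_0})-\#X_2(\F_{p_0})$ survives reduction), where ordinary character theory and the classical Chebotarev theorem give both estimates honestly; this is in substance how Serre proceeds, and it is precisely the step absent from your sketch. You are right that in the only case the paper actually uses --- $X_i=\operatorname{Spec}(O_{K_i})$, where only $H^0_c$ is nonzero, of weight zero, and the representation is the finite permutation representation $\rho_{K_i}$ --- your argument is already complete and elementary; but as a proof of the general statement it is missing its hardest step.
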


Theorem \ref{ElDeSerre} provides the right setting for our purposes.

\begin{theorem}\label{ElPrincipal}
Let $n$ be a positive integers and let $K, L$ be degree $n$ number fields. Then, $K$ and $L$ are arithmetically equivalent if and only if the set of primes such that their arithmetic type is not the same in $K$ and $L$ has Dirichlet Density less that $\displaystyle \frac{1}{4n^2} $, i.e.,   \[  \delta(A_{K,L}) >1-\frac{1}{4n^2} \iff  \zeta_{K}(s)=\zeta_{L}(s).\]

\end{theorem}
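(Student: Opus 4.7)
The forward direction is immediate: if $\zeta_K(s)=\zeta_L(s)$, then $A_{p}(K)=A_{p}(L)$ for every prime $p$ by Perlis' theorem, so $\delta(A_{K,L})=1 > 1-\tfrac{1}{4n^2}$. The content of the theorem lies in the converse, which I would prove by contrapositive: assuming $\zeta_K(s)\neq \zeta_L(s)$, I would show $\delta(A_{K,L})\leq 1-\tfrac{1}{4n^2}$.

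The plan is to route everything through Serre's Theorem \ref{ElDeSerre} applied to the schemes $X_K=\mathrm{Spec}(O_K)$ and $X_L=\mathrm{Spec}(O_L)$, both separated and of finite type over $\Z$, with good reduction outside the finite set $S=\mathrm{Ram}_{K,L}$. First I would invoke the inclusion $A_{K,L}\subseteq T_{K,L}$ noted in the remark preceding Corollary \ref{InequalityADelta}, which gives $\delta(A_{K,L})\leq \delta(T_{K,L})$. Next, using the geometric interpretation recalled from \cite[Theorem 2.5]{Manti}, the set $T_{K,L}$ coincides (up to ramified primes) with the set of $p$ for which $\#X_K(\F_p)=\#X_L(\F_p)$; this is because an ideal of $O_E$ of norm $p$ is precisely a prime of residue degree one above $p$, matching $X_E(\F_p)=\mathrm{Hom}_{\Z\text{-alg}}(O_E,\F_p)$.

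The hypothesis $\zeta_K\neq\zeta_L$ is equivalent, by Theorem \ref{AEGalRep}, to $\delta(T_{K,L})<1$, hence the complement has positive density and therefore contains at least one unramified prime $p$ with $\#X_K(\F_p)\neq\#X_L(\F_p)$. This is exactly the hypothesis needed to apply Theorem \ref{ElDeSerre}, which yields
\begin{equation*}
\delta(T_{K,L})\;\leq\;1-\frac{1}{B^2},\qquad B=B(X_K)+B(X_L).
\end{equation*}
The remaining task is to compute the Betti numbers. The complex points $X_K(\C)=\mathrm{Hom}_{\mathrm{ring}}(O_K,\C)$ form a finite discrete set of cardinality $[K:\Q]=n$ (the set of embeddings of $K$ into $\C$), so $\dim_\Q H^0_c(X_K(\C),\Q)=n$ and all higher compactly supported cohomology vanishes. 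Hence $B(X_K)=n$ and likewise $B(X_L)=n$, giving $B=2n$ and
\begin{equation*}
\delta(A_{K,L})\;\leq\;\delta(T_{K,L})\;\leq\;1-\frac{1}{(2n)^2}\;=\;1-\frac{1}{4n^2},
\end{equation*}
which is the desired contrapositive.

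The only step that requires any care is the verification that Serre's theorem is truly applicable: one must confirm that $X_K$ and $X_L$ are indeed separated of finite type over $\Z$ with good reduction off $\mathrm{Ram}_{K,L}$ (standard, since $O_K$ is a free $\Z$-module of finite rank and is smooth over $\Z[1/\Delta_K]$), and that the failure of arithmetic equivalence produces a single witness prime outside $S$ (handled by the equivalence (i)$\Leftrightarrow$(iii) of Theorem \ref{AEGalRep}). Neither presents a real obstacle once the geometric dictionary is in place; the essential content is really the cohomological calculation $B(X_K)=n$, which packages the whole degree-$n$ input into Serre's density bound.
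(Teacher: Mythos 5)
Your proposal is correct and follows essentially the same route as the paper: reduce $\delta(A_{K,L})\leq\delta(T_{K,L})$, identify $T_{K,L}$ with the point-count coincidence set for $\mathrm{Spec}(O_K)$ and $\mathrm{Spec}(O_L)$, and apply Serre's theorem with $B(X_K)=B(X_L)=n$ since $X_K(\C)$ is a discrete set of $n$ points. Your explicit use of Theorem \ref{AEGalRep} to produce the witness prime outside the ramified set is a slightly more careful phrasing of the same argument.
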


\begin{proof}

Let $X_{1}:= {\rm Spec}(O_{K})$ and $X_{1}:= {\rm Spec}(O_{L})$. Since the ring of integers of a number fields is a finitely generated $\Z$-module both $X_{1}$ and $X_{2}$ are of finite type over $\Z$, and being affine they are separated. Notice that $X_{1}(\C)$ is a finite set of $n$ elements, corresponding to the complex embeddings of  $K$, same for $L$, hence $\dim_{\Q} {\rm H}^{0}_{c} (X_{i}(\C),\Q) =n$ and  $\dim_{\Q} {\rm H}^{j}_{c} (X_{i}(\C),\Q) =0$ for $j >0$. Therefore $B=B(X_{1})+B(X_{2})=2n$. We conclude from Theorem \ref{ElDeSerre} that if there is a prime $p$ not ramifying in either $K$ or $L$ such that $a_{p}(K)=\#X_{1}(\F_{p}) \neq \#X_{2}(\F_{p})=a_{p}(L)$ then   $\delta(T_{K,L}) \leq 1 -\frac{1}{4n^2}.$ Suppose that $\delta(A_{K,L}) >1-\frac{1}{4n^2}.$ Since $\delta(T_{K,L}) \ge \delta(A_{K,L})$ it follows from our previous argument that $a_{p}(K)=a_{p}(L)$ for all but finitely many primes, hence  $\zeta_{K}(s)=\zeta_{L}(s)$. The other implication is clear since $\zeta_{K}(s)=\zeta_{L}(s)$ implies that $\delta(A_{K,L})=1$.

\end{proof}

Theorem \ref{ElPrincipal} confirms, with a weaker bound, the heuristics presented in the previous section. Based on this result and the heuristics we conjecture the following.

\begin{conjecture}

The bound of $\displaystyle \frac{1}{4n^2} $ in Theorem \ref{ElPrincipal} can be improved to $\displaystyle \frac{2}{n^2}$.

\end{conjecture}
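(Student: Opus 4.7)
The plan is to reduce to the Galois closure via Chebotarev and then apply a short character-theoretic estimate; this should in fact prove the conjecture.

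First I would set $M = \widetilde{KL}$, $G = \gal(M/\Q)$, and let $H_K, H_L \leq G$ be the subgroups with $M^{H_K} = K$ and $M^{H_L} = L$, each of index $n$. Write $\chi_K = \mathrm{Ind}_{H_K}^G \mathbf{1}$ and $\chi_L = \mathrm{Ind}_{H_L}^G \mathbf{1}$ for the associated permutation characters. For an unramified prime $p$, the orbit lengths of a Frobenius $\sigma_p$ acting on the coset space $G/H_K$ are exactly the residue degrees of the primes of $K$ above $p$. Since cycle structure is a class function of $g$, Chebotarev's theorem gives
$$\delta(A_{K,L}) \;=\; \frac{1}{|G|}\,\bigl|\{\, g \in G : g \text{ has the same cycle type on } G/H_K \text{ and on } G/H_L \,\}\bigr|,$$
while the Gassmann criterion says $\zeta_K = \zeta_L$ if and only if $\chi_K = \chi_L$.

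Now assume $\zeta_K \neq \zeta_L$ and set $\phi = \chi_K - \chi_L$, a nonzero virtual character with $\phi(e) = n - n = 0$. The key observation is $\langle \phi, \phi \rangle \geq 2$: an inner product equal to $1$ would force $\phi = \pm \chi$ for a single irreducible $\chi$, but then $\phi(e) = \pm \deg \chi \neq 0$, a contradiction. By character orthogonality,
$$\sum_{g \in G} \phi(g)^2 \;=\; |G|\,\langle \phi, \phi \rangle \;\geq\; 2|G|.$$
Because $\chi_K(g)$ and $\chi_L(g)$ count fixed points on an $n$-element set, $|\phi(g)| \leq n$ for every $g$, so the bad set $B = \{g \in G : \phi(g) \neq 0\}$ satisfies $n^2 |B| \geq 2|G|$, i.e.\ $|B|/|G| \geq 2/n^2$. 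A difference in fixed-point counts is a difference in the number of $1$-cycles, hence in cycle type, so $B$ is contained in the set of $g$ where cycle types disagree. This yields $\delta(A_{K,L}) \leq 1 - 2/n^2$, which is the contrapositive of the conjectured implication.

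The step that most warrants care is the translation from Dirichlet densities of prime sets to proportions in $G$: I would explicitly verify that the cycle type on $G/H_K$ really captures $A_p(K)$ (immediate from the orbit-residue-degree correspondence), that this cycle type is a class function (so the set on the right-hand side is a union of conjugacy classes), and that excluding the finitely many ramified primes in $M/\Q$ does not affect Dirichlet density. I do not foresee an obstacle beyond this bookkeeping; the constant $2/n^2$ is attained in the Klein four-group example (consistent with Theorem \ref{PrimeroQuadratics}), indicating the estimate is sharp, and the argument is in a sense forced: the two bounds $\langle \phi, \phi \rangle \geq 2$ and $|\phi| \leq n$ leave no slack.
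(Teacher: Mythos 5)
Your argument is correct, and it does more than the paper does: the statement you are proving appears in the paper only as a conjecture, supported by the Galois-field heuristic of Section 3 and by the low-degree cases in Theorems \ref{PrimeroQuadratics} and \ref{PrimeroCubics}, while the actual theorem proved there (Theorem \ref{ElPrincipal}) only reaches $1/(4n^2)$ by quoting Serre's cohomological bound $1-1/B^2$ with $B=B(X_1)+B(X_2)=2n$ for $X_i={\rm Spec}(O_K),{\rm Spec}(O_L)$. Your specialization of that second-moment argument to the permutation characters $\chi_K={\rm Ind}_{H_K}^{G}\mathbf{1}$ and $\chi_L={\rm Ind}_{H_L}^{G}\mathbf{1}$ on $G=\gal(\widetilde{KL}/\Q)$ gains exactly the factor of $8$ separating the two constants, and it does so at the two places where the general scheme-theoretic statement is necessarily lossy: the pointwise bound $|\chi_K(g)-\chi_L(g)|\le n$ (rather than $\chi_K(g)+\chi_L(g)\le 2n$, which is all one can say for the difference of point counts of two unrelated schemes), and the lower bound $\langle\phi,\phi\rangle\ge 2$ forced by $\phi(e)=n-n=0$ (in general one only gets $\langle\phi,\phi\rangle\ge 1$). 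The supporting steps all check out: the identification of $A_p(K)$ with the cycle type of ${\rm Frob}_p$ on $G/H_K$ for $p$ unramified in $\widetilde{KL}$, the passage from Dirichlet density to a proportion in $G$ via Chebotarev (discarding the finitely many ramified primes is harmless, and your set is conjugation-stable because cycle type is a class function), the Gassmann--Perlis criterion in the form $\chi_K=\chi_L\Rightarrow\zeta_K=\zeta_L$ (valid over $\widetilde{KL}$ by induction in stages and Artin formalism, as in Proposition \ref{ZetaLArtin}), and the inclusion of $\{g:\phi(g)\neq 0\}$ in the set where cycle types disagree. The Klein four-group example shows both inequalities are simultaneously tight, matching the paper's remark that $b_2$ is sharp. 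In short, this is a complete proof of the conjecture; it in fact bounds $\delta(T_{K,L})$, hence a fortiori $\delta(A_{K,L})$, by $1-2/n^2$ whenever $\zeta_K\neq\zeta_L$, and so strictly strengthens Theorem \ref{ElPrincipal}.
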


\begin{remark}

Notice that we have proved the conjecture in the cases $n=2,3$; this was done in part of Theorems \ref{PrimeroQuadratics}, \ref{PrimeroCubics}. 

\end{remark}

\section{Efective bounds on arithmetic equivalence}

In giving and answer to  Question \ref{HayCota} we looked at the function $\zeta_{K}(s)$ as the Weil-zeta function of the scheme ${\rm  Spec}O_{K}$. Now, for Question \ref{CheboEfectivo} we see  $\zeta_{K}(s)$ as the Artin L-function of Galois representation $\rho_{K}: {\rm G}_{\Q}  \to {\rm GL}_{n}(\C).$ With this in hand  Question \ref{CheboEfectivo} can be answered by applying an effective version of Chebotarev's density theorem. 

\subsubsection{Dedekind zeta function as Artin L-function of a representation of  ${\rm G}_{\Q}$}

Let $K$ be a degree $n$ number field, and let us denote by $\widetilde{K}$ its Galois closure over $\Q$. We start by recalling the construction of an $n$-dimensional complex Galois representation $\rho_{K}$ of the absolute Galois group $G_{\Q}$ such that the Artin $L$-function associated to $\rho_{K}$ is $\zeta_{K}(s)$. Let ${\rm Emb}(K)$ be the set of complex embeddings of $K$. The absolute Galois group $G_{\Q}$ acts continuously on ${\rm Emb}(K)$ via composition. The continuity follows since the kernel of the action is the open group $G_{\widetilde{K}}$. Since   $n=\#{\rm Emb}(K)$ the above gives a continuous permutation representation   $G_{\Q}: \pi_{K} \to S_{n}$, which by composition with the permutation representation $\iota_{n}: S_{n} \to \rm{GL}_{n}(\C)$ produces an $n$-dimensional complex representation \[\rho_{K} : G_{\Q} \to \rm{GL}_{n}(\C).\]

\begin{definition}\label{TateModuleNumberField}
Let $K$ be  a number field. The continuous $\C[G_{\Q}]$-module $T_{K}$ is the $G_{\Q}$-module attached to the representation $\rho_{K}$. In other words, $\displaystyle T_{K}:=\bigoplus_{ \sigma \in {\rm Emb} } \C\sigma$ with the action of $G_{\Q}$ on each element of the basis given by composition.
\end{definition}

The relevance of this representation to our purposes is that the Artin formalism gives us the following:

\begin{proposition}\label{ZetaLArtin}
Let $K$ be a number field and let us denote by $L(\rho, s)$ the Artin L-function attached to a representation $\rho$. Then $\displaystyle L(\rho_{K}, s)= \zeta_{K}(s).$
\end{proposition}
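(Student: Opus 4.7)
The plan is to identify $\rho_K$ with an induced representation and then invoke the inductivity of Artin L-functions. First I would recognize $\rho_K$ as $\mathrm{Ind}_{G_K}^{G_\Q} \mathbf{1}$, where $G_K := \mathrm{Gal}(\overline{\Q}/K)$ and $\mathbf{1}$ denotes the trivial one-dimensional representation. Concretely, fix a complex embedding $\sigma_0 \in \mathrm{Emb}(K)$; then $g \mapsto g \circ \sigma_0$ descends to a bijection $G_\Q/G_K \xrightarrow{\sim} \mathrm{Emb}(K)$, since $G_K$ is precisely the stabilizer of $\sigma_0$ (any element of $G_\Q$ fixing $\sigma_0$ on $K$ fixes $K$ pointwise inside $\overline{\Q}$). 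This bijection is $G_\Q$-equivariant by construction, so passing to $\C$-linearizations yields an isomorphism of continuous $\C[G_\Q]$-modules
\begin{equation*}
T_K \;\simeq\; \C[G_\Q/G_K] \;=\; \mathrm{Ind}_{G_K}^{G_\Q} \mathbf{1}.
\end{equation*}

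Second, I would appeal to the inductivity of Artin L-functions: for any closed subgroup $H \leq G_\Q$ corresponding to an intermediate field $F$ and any continuous finite-dimensional complex representation $\rho$ of $H$, one has $L(\mathrm{Ind}_H^{G_\Q}\rho, s) = L_F(\rho, s)$, where $L_F$ denotes the Artin L-function formed relative to the base field $F$. Applied to $H = G_K$ and $\rho = \mathbf{1}$, this gives
\begin{equation*}
L(\rho_K, s) \;=\; L\bigl(\mathrm{Ind}_{G_K}^{G_\Q} \mathbf{1},\, s\bigr) \;=\; L_K(\mathbf{1}, s).
\end{equation*}

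Third, the Artin L-function $L_K(\mathbf{1}, s)$ of the trivial character of $G_K$ is by definition the Euler product $\prod_{\mathfrak{p}}(1 - N\mathfrak{p}^{-s})^{-1}$ over the nonzero prime ideals of $O_K$ (the local representation spaces are all one-dimensional with trivial inertia action), which is precisely $\zeta_K(s)$. Chaining the three equalities yields the desired identity $L(\rho_K, s) = \zeta_K(s)$.

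The main obstacle is conceptual rather than computational: one has to be comfortable quoting Artin's inductivity formula, whose own proof requires a careful local analysis matching the Euler factor at a prime $p$ of $\Q$ with a product of Euler factors at primes of $K$ above $p$ (equivalently, a comparison of the characteristic polynomial of Frobenius on the induced module with its counterparts on $\rho$ at the primes of $F$ over $p$). Once inductivity is granted, the remaining work is bookkeeping: verifying $G_\Q$-equivariance of $G_\Q/G_K \leftrightarrow \mathrm{Emb}(K)$ and checking the ramified Euler factors match on the nose, both of which are routine.
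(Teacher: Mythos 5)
Your proposal is correct and follows essentially the same route as the paper: identify $\rho_{K}$ with the induction of the trivial representation from the subgroup fixing $K$ (the paper does this at the level of the finite quotient ${\rm Gal}(\widetilde{K}/\Q)$ and the subgroup ${\rm Gal}(\widetilde{K}/K)$, you at the level of $G_{\Q}$ and the open subgroup $G_{K}$, which is the same thing since the representation factors through the finite quotient), then apply Artin's inductivity and recognize the $L$-function of the trivial character over $K$ as $\zeta_{K}(s)$. The extra care you take in checking the $G_{\Q}$-equivariance of $G_{\Q}/G_{K}\leftrightarrow{\rm Emb}(K)$ is a welcome elaboration of a step the paper attributes to \say{basic Galois theory}.
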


\begin{proof}

By Galois correspondence $\rho_{K}$ factorizes through a map $\widetilde{\rho}_{K}:  {\rm Gal}(\widetilde{K}/\Q) \to \rm{GL}_{n}(\C)$

\[\xymatrix{
{G_{\Q}}\ar[r]^{{\rm Res}^{\overline{\Q}}_{\widetilde{K}} \ \ \ \ \ \  }\ar[dr]_{\rho_{K}}&{ {\rm Gal}(\widetilde{K}/\Q)}\ar[d]^{\widetilde{\rho}_{K}}\cr
&{\rm{GL}_{n}(\C)}\cr}
\]

 Again, by basic Galois theory, the action of ${\rm Gal}(\widetilde{K}/\Q)$ in ${\rm Emb}(K)$ is isomorphic to the permutation representation of  ${\rm Gal}(\widetilde{K}/\Q)$ in the set of cosets  ${\rm Gal}(\widetilde{K}/\Q)/ {\rm Gal}(\widetilde{K}/K)$. Hence, 
$\displaystyle \widetilde{\rho}_{K} \cong  {\rm Ind}_{{\rm Gal}(\widetilde{K}/K)}^{{\rm Gal}(\widetilde{K}/\Q)} 1_{{\rm Gal}(\widetilde{K}/K)}.$ Thanks to Artin's formalism 
\[ L(\rho_{K}, s)=L\left(\widetilde{\rho}_{K}, s\right) =L\left( {\rm Ind}_{{\rm Gal}(\widetilde{K}/K)}^{{\rm Gal}(\widetilde{K}/\Q)} 1_{{\rm Gal}(\widetilde{K}/K)} , s\right)=L(1_{{\rm Gal}(\widetilde{K}/K)} ,s)= \zeta_{K}(s).\] \end{proof}

Since the Dedekind zeta function is an Artin $L$-function then its prime terms correspond to traces of Frobenius elements:

\begin{corollary}\label{Frobenius}

Let $K$ be a number field and $\ell$ be a prime unramified\footnote{This is the same as being unramified in $K$ since the conductor of $\rho_{K}$ is the discriminant of $K$.} under $\rho_{K}$. Let ${\rm Frob}_{\ell}$ be the conjugacy class of the element Frobenius at $\ell$. Then,  \[{\rm Trace}(\rho_{K}({\rm Frob}_{\ell})) =a_{\ell}(K).\]
\end{corollary}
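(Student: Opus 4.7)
\smallskip

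The plan is to reduce the statement to a standard fact about permutation representations, then apply the classical dictionary between Frobenius orbits on $G/H$ and primes above $\ell$. First, I would note that since the Artin conductor of $\rho_K$ coincides with $\mathrm{Disc}(K)$, the hypothesis that $\ell$ is unramified under $\rho_K$ is the same as $\ell \nmid \mathrm{Disc}(K)$, so $\ell$ is unramified in $K$. If $\ell O_K = \mathfrak{p}_1 \cdots \mathfrak{p}_g$ with residue degrees $f_1 \leq \cdots \leq f_g$, then the ideals of $O_K$ of norm exactly $\ell$ are precisely those $\mathfrak{p}_i$ with $f_i = 1$, since $\|\mathfrak{p}_i\| = \ell^{f_i}$ and any proper product of prime ideals has norm a proper power of $\ell$ exceeding $\ell$. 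Hence $a_\ell(K) = \#\{\, i : f_i = 1 \,\}$.

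Next I would use the description from the proof of Proposition \ref{ZetaLArtin}: with $G := \mathrm{Gal}(\widetilde{K}/\Q)$ and $H := \mathrm{Gal}(\widetilde{K}/K)$, the representation $\rho_K$ factors through $\widetilde{\rho}_K \cong \mathrm{Ind}_H^G 1_H$, which is the permutation representation of $G$ on the coset space $G/H$ (equivalently on $\mathrm{Emb}(K)$). The character of any permutation representation evaluated at an element $\sigma$ equals the number of fixed points of $\sigma$ on the underlying set. Applied to a lift of $\mathrm{Frob}_\ell$, this yields
\[
\mathrm{Trace}(\rho_K(\mathrm{Frob}_\ell)) = \#\{\, gH \in G/H : \mathrm{Frob}_\ell \cdot gH = gH \,\}.
\]

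The remaining step is the classical correspondence: the orbits of $\mathrm{Frob}_\ell$ acting on $G/H$ are in bijection with the primes $\mathfrak{p}_1, \ldots, \mathfrak{p}_g$ of $K$ lying above $\ell$, and the length of the orbit attached to $\mathfrak{p}_i$ is exactly $f_i$. (Fixing a prime $\mathfrak{P}$ of $\widetilde{K}$ above $\ell$ with decomposition group $D = \langle \mathrm{Frob}_\ell \rangle$, this is the standard double coset decomposition $G = \bigsqcup_i H g_i D$ identifying primes above $\ell$ with $H\backslash G / D$.) Therefore the fixed points of $\mathrm{Frob}_\ell$ on $G/H$ correspond to orbits of length one, which are exactly the primes $\mathfrak{p}_i$ with $f_i = 1$. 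Combining with the first paragraph,
\[
\mathrm{Trace}(\rho_K(\mathrm{Frob}_\ell)) = \#\{\, i : f_i = 1 \,\} = a_\ell(K).
\]

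There is no real obstacle here; the only point requiring care is that the statement is independent of the choice of Frobenius within its conjugacy class (which is automatic since traces are class functions) and within the choice of prime of $\widetilde{K}$ above $\ell$. Everything else is a direct application of the induction formula and the orbit–prime dictionary that was implicit in the proof of Proposition \ref{ZetaLArtin}.
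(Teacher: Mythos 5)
Your proof is correct and complete, but it takes a more hands-on route than the paper, which offers no written proof and instead derives the corollary from Proposition \ref{ZetaLArtin} by the general formalism of Artin $L$-functions: the Euler factor of $L(\rho_K,s)$ at an unramified $\ell$ is $\det\bigl(1-\rho_K(\mathrm{Frob}_\ell)\ell^{-s}\bigr)^{-1}$, whose $\ell^{-s}$-coefficient is $\mathrm{Trace}(\rho_K(\mathrm{Frob}_\ell))$, while the $\ell^{-s}$-coefficient of $\zeta_K(s)$ is $a_\ell(K)$; equality of the two Dirichlet series then gives the statement in one line. You instead bypass the $L$-function identity entirely and compute both sides directly: the trace as the number of fixed cosets of $\mathrm{Frob}_\ell$ on $G/H$ (Burnside-style character of the permutation representation), and $a_\ell(K)$ as the number of degree-one primes above $\ell$, matched via the double-coset / orbit--prime dictionary. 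What your approach buys is self-containedness --- it only uses the description of $\rho_K$ as the permutation representation on $\mathrm{Emb}(K)$, not the full Artin formalism --- and it makes transparent \emph{why} the trace counts ideals of norm $\ell$; what the paper's approach buys is brevity and the fact that it generalizes immediately to the full characteristic polynomial of Frobenius (the Lemma that follows the corollary in the paper). All the individual steps you use (multiplicativity of the ideal norm forcing ideals of norm $\ell$ to be degree-one primes, the fixed-point formula for induced trivial characters, orbit lengths equal to residue degrees at unramified primes) are standard and correctly applied.
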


Proposition \ref{ZetaLArtin} gives not only a simple way to express the trace of Frobenius but it also gives a useful generalization of the above corollary to calculate its characteristic polynomial $\displaystyle \det(X-\rho_{K}({\rm Frob}_{\ell})).$

\begin{lemma}
Let $K$ be a number field and $\ell$ be a prime, unramified in $K$, and let $(f_{1},...,f_{g})$ be the arithmetic type of $\ell$ in $K$. Then, 
\[{\rm det}(X-\rho_{K}({\rm Frob}_{\ell}))=\prod_{i=1}^{g}(X^{f_{i}}-1).\]

\end{lemma}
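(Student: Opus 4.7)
The plan is to reduce the statement to a purely combinatorial computation by leveraging the description of $\rho_K$ already established in the proof of Proposition \ref{ZetaLArtin}. That proof identifies $\rho_K$ (factored through $\mathrm{Gal}(\widetilde{K}/\Q)$) with the induced representation
$\mathrm{Ind}_{\mathrm{Gal}(\widetilde{K}/K)}^{\mathrm{Gal}(\widetilde{K}/\Q)} 1$, which is the same as the permutation representation of $G:=\mathrm{Gal}(\widetilde{K}/\Q)$ on the coset space $G/H$, where $H:=\mathrm{Gal}(\widetilde{K}/K)$. Thus the matrix $\rho_K(\mathrm{Frob}_\ell)$ is conjugate to the permutation matrix of $\mathrm{Frob}_\ell$ acting on the $n$-element set $G/H$, so its characteristic polynomial is determined by the cycle type of that action.

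Next I would invoke the classical dictionary between primes of $K$ above $\ell$ and orbits of a Frobenius element on $G/H$. Concretely, for $\ell$ unramified in $\widetilde{K}$ (equivalently, in $K$, since by the footnote this is the same as being unramified for $\rho_K$), pick a prime $\mathfrak{P}$ of $\widetilde{K}$ over $\ell$ and let $\phi\in G$ be the associated Frobenius. The primes of $K$ above $\ell$ are in bijection with the orbits of $\langle\phi\rangle$ on $G/H$, and the residue class degree of the prime corresponding to an orbit equals the length of that orbit. Consequently the cycle type of $\phi$ acting on $G/H$ is exactly the arithmetic type $(f_1,\dots,f_g)$; this is independent of the choice of $\mathfrak{P}$, which is why only the conjugacy class of $\mathrm{Frob}_\ell$ matters.

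Finally I would carry out the routine linear-algebra computation. A permutation matrix with cycle type $(f_1,\dots,f_g)$ is conjugate to a block-diagonal matrix whose blocks are the cyclic permutation matrices $C_{f_i}$ of sizes $f_i$. Since $\det(X\cdot I - C_f) = X^f - 1$ for every $f\ge 1$, multiplicativity of the determinant over block-diagonal matrices yields
\[
\det(X-\rho_K(\mathrm{Frob}_\ell)) \;=\; \prod_{i=1}^{g}\det(X\cdot I_{f_i} - C_{f_i}) \;=\; \prod_{i=1}^{g}(X^{f_i}-1),
\]
as claimed.

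No step here poses a serious difficulty: Proposition \ref{ZetaLArtin} already does the representation-theoretic work, and the characteristic polynomial of a cyclic permutation matrix is standard. The only substantive input is the identification of the cycle lengths of $\mathrm{Frob}_\ell$ on $G/H$ with the residue class degrees $f_i$, which is the classical splitting-of-primes-in-Galois-extensions principle and is the conceptual heart of the lemma; everything else is bookkeeping.
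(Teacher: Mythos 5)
Your proof is correct. The paper does not include its own argument for this lemma---it simply defers to \cite[Lemma 2.4]{Manti}---but the route you take (identify $\rho_{K}(\mathrm{Frob}_{\ell})$ with the permutation matrix of a Frobenius element acting on $G/H$ with $H=\gal(\widetilde{K}/K)$, match the cycle lengths of that action with the residue class degrees via the classical orbit--prime dictionary, and then use that a cyclic permutation matrix of size $f$ has characteristic polynomial $X^{f}-1$) is the standard one and is precisely what the cited reference carries out. All three steps are sound, and you correctly flag the orbit-length/residue-degree identification as the only nontrivial input.
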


For a proof see \cite[Lemma 2.4]{Manti}.\\

This type of analysis provides a way to obtain several of the classic results about arithmetic equivalence. For instance, using Corollary  \ref{Frobenius} the equivalence (i), (iii) of Theorem \ref{AEGalRep} \[\delta(T_{K,L})=1 \iff \zeta_{K}(s)=\zeta_{K}(s)\] is obtained. 

This point of view together with an effective version of Chebotarev's density theorem gives an answer to Question \ref{CheboEfectivo}. For details about an effective version of Chebotarev's density theorem, and a proof of the following, see \cite{AhnKwon}.

\begin{theorem}\label{ElCheboEfectivo}
Let $E$ be a Galois number field with Galois group $G=\gal(E/\Q)$ and absolute value discriminant $D$ and degree $N$.  For every conjugacy class $C$ of $G$ there is a rational prime $p$, unramified in $E$, such that  $p \leq D^{12577} $ and the conjugacy class of a Frobenius element at $p$ is equal to $C$. Under GRH the bound $D^{12577}$ can be replaced by \[(4\log(D)+2.5N+5)^2. \]
\end{theorem}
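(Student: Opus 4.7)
The plan is to deduce the bound from the analytic theory of Artin $L$-functions attached to irreducible characters of $G=\gal(E/\Q)$. First I would encode the Chebotarev condition through character orthogonality: the indicator function of a conjugacy class $C$ decomposes as $\mathbf{1}_{C}(g)=\frac{|C|}{|G|}\sum_{\chi}\overline{\chi(C)}\chi(g)$, so the weighted prime-counting function $\psi_{C}(x):=\sum_{N\mathfrak{p}^{k}\le x,\ \mathrm{Frob}_{\mathfrak{p}}\subseteq C}\log N\mathfrak{p}$ can be rewritten as a $\C$-linear combination of the logarithmic derivatives $-\frac{L'}{L}(s,\chi)$. Perron's formula then converts $\psi_{C}(x)$ into a sum of contour integrals, one for each irreducible $\chi$ of $G$. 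Producing an unramified prime with Frobenius conjugacy class $C$ below $x$ reduces to showing $\psi_{C}(x)>0$.

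Next I would apply the explicit formula to each $L(s,\chi)$, yielding for $\chi\neq 1$ an identity of the form $\psi(x,\chi)=-\sum_{\rho}\frac{x^{\rho}}{\rho}+O(\log(D_{\chi}\,x))$, where $\rho$ ranges over the nontrivial zeros in the critical strip and $D_{\chi}$ is the Artin conductor of $\chi$. To sidestep the (still open) Artin holomorphy conjecture, I would invoke Brauer induction to write each $\chi$ as a $\Z$-linear combination of characters induced from one-dimensional characters of solvable subgroups, so that the auxiliary $L$-functions involved are Hecke $L$-functions and hence entire. The conductor-discriminant formula keeps $\sum_{\chi}\dim(\chi)\log D_{\chi}$ controlled by $O(\log D)$ where $D=|\mathrm{Disc}(E)|$.

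Under GRH the bulk is then routine: each nontrivial zero satisfies $|\rho|\ge |\mathrm{Im}(\rho)|$ and the density estimate $\#\{\rho:|\mathrm{Im}(\rho)|\le T\}\ll T\log(D_{\chi}T^{N})$ gives $\bigl|\psi_{C}(x)-\tfrac{|C|}{|G|}x\bigr|\ll x^{1/2}(\log D+N)^{2}$. Choosing $x$ of the order $(4\log D+2.5N+5)^{2}$ then forces the main term to dominate, up to the specific constants obtained by carefully tracking every estimate as in Ahn--Kwon. The unconditional case is the main obstacle: one has to replace GRH with a Lagarias--Odlyzko zero-free region obtained from a log-free zero density estimate, and then neutralize a potential Siegel (Landau) zero of some $L(s,\chi)$ via the Deuring--Heilbronn phenomenon, which shows that an exceptional real zero close to $s=1$ repels all other zeros of the family. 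Pushing these bounds through with the constants optimized as in Bach--Sorenson and further refined by Ahn--Kwon is exactly the source of the exponent $12577$ in $D^{12577}$; this ad hoc numerical optimization, rather than the analytic machinery, is where the real work lies.
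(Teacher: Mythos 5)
The paper does not actually prove this theorem: it is quoted verbatim from the cited reference of Ahn--Kwon (with the GRH bound $(4\log D+2.5N+5)^2$ going back to Bach--Sorenson), so there is no in-paper argument to compare against. Your sketch is an accurate description of the architecture of the proof in those references, i.e.\ the Lagarias--Odlyzko framework: isolate the class $C$ by character orthogonality, pass to $-\frac{L'}{L}(s,\chi)$ and the explicit formula, use Brauer induction to replace Artin $L$-functions by entire Hecke $L$-functions, control conductors via the conductor--discriminant formula, and then either count zeros on the critical line (GRH case) or combine a zero-free region, a log-free zero density estimate, and the Deuring--Heilbronn repulsion of a possible exceptional zero (unconditional case). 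Two caveats. First, as a proof your text is a roadmap rather than an argument: the entire content of the theorem lies in the explicit constants, and you defer exactly that part (``carefully tracking every estimate,'' ``ad hoc numerical optimization'') to the references --- which is no worse than what the paper itself does, but it means the proposal does not independently establish the statement. Second, a small point you gloss over: $\psi_C(x)>0$ only produces a prime \emph{power} with Frobenius in $C$; one must also discard the contributions of higher prime powers and of the finitely many ramified primes to extract an unramified rational prime $p\le x$, a standard but necessary step when the constants are being made explicit.
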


\begin{definition}\label{LaConstanteCheboEfectiveField}

Let $K$ be a number field with Galois closure $\widetilde{K}$. Suppose that $\widetilde{K}$ has degree $n_{\widetilde{K}}$ and absolute value of the discriminant equal to $d_{\widetilde{K}}$. Let \[X_{K}:=d_{\widetilde{K}}^{12577} \ \ \ X_{K}^{1}:=(4\log(d_{\widetilde{K}})+2.5n_{\widetilde{K}}+5)^2 \]

\end{definition}

\begin{theorem}\label{AECheboEfectiva}
Let $K$ be a number field. Then, for every number field $L$ such that $\widetilde{K}=\widetilde{L}$ we have that \[ \zeta_{K}(s)=\zeta_{L}(s)  \iff A_{p}(K)=A_{p}(L)\  \mbox{for all}  \ p \leq X_{K}.\]

\noindent Moreover, under GRH the bound $X_{K}$ can be replaced by $X_{K}^{1}.$

\end{theorem}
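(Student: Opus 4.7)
The forward implication is immediate from Perlis' Theorem \ref{Perlis}: if $\zeta_K(s)=\zeta_L(s)$, then $A_p(K)=A_p(L)$ for every rational prime $p$, hence in particular for all $p \leq X_K$. So the plan focuses entirely on the reverse direction.

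The key idea is to view the Dedekind zeta functions as Artin $L$-functions via Proposition \ref{ZetaLArtin}, reducing the statement $\zeta_K(s)=\zeta_L(s)$ to the equivalence of the two complex representations $\widetilde{\rho}_K$ and $\widetilde{\rho}_L$ of the finite group $G=\gal(\widetilde{K}/\Q)$. By hypothesis $\widetilde{K}=\widetilde{L}$, so both representations genuinely live on the same group $G$, and by character theory it suffices to check that their characters agree on every conjugacy class of $G$.

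To verify this, I would combine the effective Chebotarev Theorem \ref{ElCheboEfectivo} with Corollary \ref{Frobenius}. Given any conjugacy class $C$ of $G$, Theorem \ref{ElCheboEfectivo} produces an unramified rational prime $p\leq d_{\widetilde{K}}^{12577}=X_K$ whose Frobenius conjugacy class equals $C$ (and $p\leq X_K^{1}$ under GRH). Corollary \ref{Frobenius} then gives
\[
\mathrm{Trace}(\widetilde{\rho}_K(C))=a_p(K),\qquad \mathrm{Trace}(\widetilde{\rho}_L(C))=a_p(L).
\]
The hypothesis $A_p(K)=A_p(L)$ for all $p\leq X_K$ implies in particular that $a_p(K)=a_p(L)$ for each such $p$ (since $a_p(E)$ is just the number of $1$'s appearing in the arithmetic type $A_p(E)$). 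Consequently $\widetilde{\rho}_K$ and $\widetilde{\rho}_L$ have equal characters on every conjugacy class of $G$, so they are isomorphic as $G$-representations; taking Artin $L$-functions and invoking Proposition \ref{ZetaLArtin} yields $\zeta_K(s)=\zeta_L(s)$.

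I do not expect any step to be a serious obstacle: the reduction from zeta functions to Galois representations is already done in Proposition \ref{ZetaLArtin}, and the effective bound is imported as a black box from \cite{AhnKwon}. The only subtlety to be careful about is that the Chebotarev bound $X_K$ depends on $\widetilde{K}$, which equals $\widetilde{L}$ by assumption, so the constant $X_K$ genuinely works for the pair $(K,L)$ uniformly; the minor point that some $p\leq X_K$ may ramify is harmless because we only need Chebotarev to supply one unramified prime per conjugacy class, and the $A_p$ comparison hypothesis is assumed at every $p\leq X_K$ whether ramified or not. The GRH-conditional bound follows by exactly the same argument, substituting $X_K^{1}$ for $X_K$ in the invocation of Theorem \ref{ElCheboEfectivo}.
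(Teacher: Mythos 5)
Your proposal is correct and follows essentially the same route as the paper's own proof: reduce to equality of the characters of $\widetilde{\rho}_K$ and $\widetilde{\rho}_L$ on $\gal(\widetilde{K}/\Q)$, hit every conjugacy class with an unramified prime $p\le X_K$ via the effective Chebotarev theorem, and use $a_p(K)=a_p(L)$ (extracted from $A_p(K)=A_p(L)$) to match the traces of Frobenius. The only cosmetic difference is that the paper phrases the character comparison in terms of arbitrary elements $\sigma\in G_{\Q}$ restricted to $\widetilde{K}$ rather than conjugacy classes of the finite quotient, which amounts to the same verification.
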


\begin{proof}

Suppose that $A_{p}(K)=A_{p}(L)\  \mbox{for all}  \ p \leq X_{K}$(resp. $X^{1}_{K}$). Since $a_{p}(K)$ is equal to the number of 1's in $A_{p}(K)$, resp. for $L$, then $a_{p}(K)=a_{p}(L)$ for all $p \leq X_{K}$ resp. $X^{1}_{K}$. Let $\sigma \in G_{\Q}$ and let $\widetilde{\sigma} \in \gal(\widetilde{K}/\Q)$ be the restriction of $\sigma$ to $\widetilde{K}$. By Theorem \ref{ElCheboEfectivo} there is a prime $p \leq X_{K}$ (resp. $X^{1}_{K}$ under GRH) such that $\widetilde{\sigma}$ is the conjugacy class of a Frobenius element in  $\gal(\widetilde{K}/\Q)$  at $p$, say ${\rm Frob}_{p}^{\widetilde{K}}$. Then, since $\displaystyle \widetilde{\rho}_{E} \circ {\rm Res}^{\overline{\Q}}_{\widetilde{E}}=\rho_{E}$, for $E=K,L$, and since $\widetilde{K} =\widetilde{L}$ we have that 
\[ {\rm Trace}(\rho_{K}(\sigma))={\rm Trace}(\widetilde{\rho}_{K}(\widetilde{\sigma}))= {\rm Trace}(\widetilde{\rho}_{K}({\rm Frob}_{p}^{\widetilde{K}}))=a_{p}(K)=\] \[a_{p}(L)={\rm Trace}(\widetilde{\rho}_{L}({\rm Frob}_{p}^{\widetilde{K}}))={\rm Trace}(\widetilde{\rho}_{L}(\widetilde{\sigma}))={\rm Trace}(\rho_{L}(\sigma)).\] Since $\sigma$ is an arbitrary element, and $\rho_{K}$ and $\rho_{L}$ are Artin representations, we have that $\rho_{K}$ and $\rho_{L}$ are equivalent so their $L$-functions are the same, i.e., $\zeta_{K}(s)=\zeta_{L}(s)$ thanks to Proposition \ref{ZetaLArtin}. The reverse implication is clear by Theorem \ref{Perlis}.

\end{proof}

%\guillermo{TWO SEPTIC FIELDS CALCULATION} 

\begin{remark}

Thanks to a result of Zaman, see \cite{Asif}, the bound $d_{\widetilde{K}}^{12577}$  can be replaced by a bound of the form $Cd_{\widetilde{K}}^{40}$ where $C$ is an effective absolute constant.

\end{remark}

{\footnotesize Guillermo Mantilla-Soler, Department of Mathematics, Fundaci\'on Universitaria Konrad Lorenz,
Bogot\'a, Colombia. Department of Mathematics and Systems Analysis, Aalto University, Espoo, Finland. ({\tt gmantelia@gmail.com})}

\end{document}